\newtheorem{theorem}{Theorem}[section]
\newtheorem{lemma}[theorem]{Lemma}
\newcommand{\W}{\mathcal{W}}
\newcommand{\T}{\mathscr{T}}
\newcommand{\Z}{\mathscr{Z}}
\newcommand{\V}{\mathcal{V}}
\newcommand{\K}{\Delta}
\newcommand{\M}{\mathcal{M}}
\theoremstyle{definition}
\newtheorem{definition}[theorem]{Definition}
\newtheorem{observation}[theorem]{Observation}
\newtheorem{remark}[theorem]{Remark}
\theoremstyle{plain}
\newtheorem{proposition}[theorem]{Proposition}
\title{Cancellation of a critical pair in discrete Morse theory and its effect on (co)boundary operators}
\author[a]{Anupam Mondal \orcidlink{0000-0002-6547-4835}}
\author[b]{Sajal Mukherjee \orcidlink{0009-0004-6959-4334}}
\author[c]{Pritam Chandra Pramanik \orcidlink{0009-0000-8332-3347}}
\affil[a,b,c]{\small Institute for Advancing Intelligence (IAI), TCG CREST,
	Kolkata -- 700091, West Bengal, India}
\affil[a,b]{\small Academy of Scientific \& Innovative Research (AcSIR),
	Ghaziabad -- 201002, Uttar Pradesh, India}
\affil[ ]{\texttt{\{anupam.mondal, sajal.mukherjee,
		pritam.pramanik.80\}@tcgcrest.org}}
\date{}
\begin{document}
	\maketitle
	\begin{abstract}
	   Discrete Morse theory helps us compute the homology groups of simplicial complexes in an efficient manner. A ``good" gradient vector field reduces the number of critical simplices, simplifying the homology calculations by reducing them to the computation of homology groups of a simpler chain complex. This homology computation hinges on an efficient enumeration of gradient trajectories. The technique of cancelling pairs of critical simplices reduces the number of critical simplices, though it also perturbs the gradient trajectories. In this article, in a purely combinatorial manner, we derive an explicit formula for computing the modified boundary operators after cancelling a critical pair, in terms of the original boundary operators. The same formula can be obtained through a sequence of elementary row operations on the original boundary operators. Thus, it eliminates the need of enumeration of the new gradient trajectories. We also obtain a similar result for coboundary operators.

		\textbf{Keywords:} discrete Morse theory, gradient vector field, Morse complex, chain complex, homology, boundary operator.
		
		\textit{MSC 2020:} 57Q70 (primary), 05E45, 55U15.
	\end{abstract}

	\section{Introduction}
	Discrete Morse theory, introduced by Forman~\cite{forman1998}, serves as a
	combinatorial counterpart to (smooth) Morse theory (see also
	\cite{chari,forman2002,knudson,kozlov,scoville}). Over time, it has proven to be
	highly valuable across various areas of (theoretical and applied) mathematics and computer science, for example computational topology, data analysis (see \cite[Chapter 10]{TKD}), etc. At the core of this theory is the concept of a \emph{discrete Morse function} defined on a finite simplicial complex (or, a regular CW complex). This function helps us obtain a more efficient cell decomposition for the
	complex by reducing the number of simplices (or, cells) while preserving its
	homotopy-type.
	
	In practice, rather than working directly with discrete Morse functions, we
	often use an equivalent and more practical notion, namely, (discrete)
	\emph{gradient vector fields}. The homotopy-type of the complex is completely
	determined by only simplices that are \emph{critical} with respect to the assigned gradient vector field. Thus we are required to start with a gradient vector field that admits a low number of critical simplices. An \emph{optimal gradient vector field} is one that minimizes the
	number of critical simplices. However, finding such an optimal gradient vector field is an \textsf{NP-hard} problem, even for two-dimensional
	complexes~\cite{egecioglu,joswig,lewiner}.
	
	Furthermore, discrete Morse theory offers efficient methods for computing
	topological invariants such as homology groups \cite{mondal}, Betti numbers, etc.
	We remark that these computations are dependent on the choice of gradient vector field that admits a low number of critical simplices, as well as one that allows an efficient (weighted) enumeration of important combinatorial
	objects, viz., \emph{gradient trajectories}. The task of computing homology gets reduced to the computation of (discrete)
	\emph{Morse homology groups}, which are homology groups of a relatively simpler
	chain complex, known as \emph{Morse complex}.
	
	The technique of \emph{cancelling a pair of critical simplices}~\cite{forman1998,forman2002} (or, more generally \emph{simultaneous cancellations}~\cite{Hersh}) is a crucial technique that allows us to augment a given gradient vector field to a more efficient one. However, cancellation of critical pair(s) perturbs the set of critical simplices, as well as (some of) the gradient trajectories. Therefore, to compute the Morse homology groups with respect to the improved gradient vector field obtained after cancellation, one is required to
	enumerate all possible gradient trajectories with respect to the new gradient vector field. Thus, it is a natural problem of interest to 
	\begin{itemize}
		\item determine how exactly such cancellations affect gradient trajectories, and
		\item establish a formal relationship between (co)boundary operators, pre- and post-cancellations.
	\end{itemize}
	
	In this article, we address this problem and provide an explicit formula for the
	modified boundary operators (with respect to the improved gradient vector field), in
	terms of the boundary operators with respect to the gradient vector field
	before cancellation. This formula can be obtained through a sequence of elementary row operations on the matrix representations of the original boundary operators.
	\begin{theorem} \label{T1.1}
		Let $\K$ be a $d$-dimensional simplicial complex with an assigned gradient
		vector field $\V$. Let $(\sigma_0^{(k)},\tau_0^{(k-1)})$ be a cancellable
		critical pair,  for some $k \in \{1, \dots, d\}$, and $\W$ be the gradient
		vector field obtained by cancelling $(\sigma_0^{(k)},\tau_0^{(k-1)})$ from $\V$.
		Let $\partial_q^{\V}: C_q^{\V}(\K) \rightarrow C_{q-1}^{\V}(\K)$ and
		$\partial_q^{\W}: C_q^{\W}(\K) \rightarrow C_{q-1}^{\W}(\K)$ be the $q$-th
		boundary maps of the Morse complexes corresponding to $\V$ and $\W$,
		respectively. Then the following hold.
		\begin{enumerate}[(1)]
			\item For $q>k+1$ or $q<k-1$,
			\[ \partial_q^{\W}= \partial_q^{\V}.\]
			\item  If  $ \sigma_1, \dots, \sigma_n $ are the $\W$-critical $k$-simplices,
			and for any $\W$-critical $(k+1)$-simplex $\beta$, $\partial_{k+1}^{\V}(\beta)=
			\sum_{j=0}^{n} b_j\sigma_j$, then 
			\[\partial_{k+1}^{\W}(\beta)= \sum_{j=1}^{n} b_j\sigma_j.\]
			\item The operator $\partial_{k-1}^{\W}$ is the restriction of
			$\partial_{k-1}^{\V}$ to the subgroup $C_{k-1}^{\W}(\K)$, i.e.,
			\[\partial_{k-1}^{\W}=\partial_{k-1}^{\V}\big|_{C_{k-1}^{\W}(\K)}.\]
			\item Let $\sigma_0, \sigma_1, \dots, \sigma_n$ be the $\V$-critical
			$k$-simplices, and
			$\tau_0, \tau_1, \dots, \tau_m$ be the $\V$-critical $(k-1)$-simplices, and
			for all $j\in\{0, \dots , n\}$, $\partial_k^{\V}(\sigma_j)= \sum_{i=0}^m
			a_{ij}\tau_i$ . Then, for all $j\in\{1, \dots , n\}$,
			\[\partial_k^{\W}(\sigma_j)= \sum_{i=1}^m (a_{ij}-a_{00}a_{0j}a_{i0}) \cdot
			\tau_i.\]
			In other words, we may obtain (the matrix representation of)  $\partial_k^{\W}$ from $\partial_k^{\V}$ by a sequence of elementary row (or, column) operations. Let the matrix $B$ represent the boundary operator $\partial_k^{\V}$, with rows and columns indexed by $0,1,\ldots,m$ ($i$-th row corresponds to $\tau_i$) and $0,1,\ldots,n$ ($j$-th column corresponds to $\sigma_j$), respectively. We obtain a matrix 
			$B'$ from $B$ as follows
			\[B \xrightarrow[\text{for each }i\geq 1]{R_i-a_{i0}\cdot a_{00}\cdot  R_0} B'.\]
			Then $\partial_k^{\W}$ is the submatrix of $B'$, obtained by deleting the $0$-th row and the $0$-th column. 
		\end{enumerate}	
	\end{theorem}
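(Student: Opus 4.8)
The plan is to work throughout with the combinatorial (Forman) description of the Morse boundary: for a critical $q$-simplex $\alpha$ and a critical $(q-1)$-simplex $\gamma$, the coefficient of $\gamma$ in $\partial_q^{\V}(\alpha)$ is the sum of the multiplicities $m(\rho)=\pm1$ over the gradient $\V$-paths $\rho$ from a facet of $\alpha$ to $\gamma$, and likewise for $\W$. First I would record the effect of cancellation on the arrows: by the definition of a cancellable pair there is a \emph{unique} gradient $\V$-path $P$ from a facet of $\sigma_0$ to $\tau_0$ (hence $a_{00}=m(P)=\pm1$), and passing from $\V$ to $\W$ reverses exactly the arrows of $\V$ lying along $P$. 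Consequently every modified arrow joins a $(k-1)$-simplex to a $k$-simplex; every $k$-simplex occurring on $P$ is paired \emph{downward} under both $\V$ and $\W$, and every $(k-1)$-simplex occurring on $P$ other than $\tau_0$ is paired \emph{upward} under both; and the only simplices changing their critical status are $\sigma_0$ (critical, then paired downward) and $\tau_0$ (critical, then paired upward). In particular $C_r^{\W}(\K)=C_r^{\V}(\K)$ for $r\notin\{k-1,k\}$, while $C_k^{\W}(\K)=\langle\sigma_1,\dots,\sigma_n\rangle$ and $C_{k-1}^{\W}(\K)=\langle\tau_1,\dots,\tau_m\rangle$.

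Given this, parts (1)--(3) need no manipulation of paths. The gradient paths computing $\partial_q$ use only arrows between dimensions $q-1$ and $q$. For $q\geq k+2$ or $q\leq k-2$ these avoid all modified arrows and no simplex of dimension $q$ or $q-1$ changed status, so $\partial_q^{\W}=\partial_q^{\V}$, proving (1). For $q=k+1$ the relevant arrows run from a $k$-simplex up to a $(k+1)$-simplex, none of which is modified, so the $k$-dimensional paths are unchanged; since $\sigma_0$ can occur in such a path only as its critical terminal simplex, dropping $\sigma_0$ from the critical set merely removes the $b_0\sigma_0$ summand, proving (2). Dually, for $q=k-1$ the relevant arrows run from a $(k-2)$-simplex up to a $(k-1)$-simplex, again none modified, so the $(k-2)$-dimensional paths are unchanged; since $\tau_0$ cannot occur in such a path and $C_{k-2}^{\W}(\K)=C_{k-2}^{\V}(\K)$, this gives $\partial_{k-1}^{\W}=\partial_{k-1}^{\V}\big|_{C_{k-1}^{\W}(\K)}$, proving (3).

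Part (4) carries the content, since the $(k-1)$-dimensional paths computing $\partial_k$ genuinely travel along the reversed arrows; relative to $\V$, passing to $\W$ creates essentially one new family of gradient paths, those running along the reversed $P$, and any actual path uses this new segment at most once (because $P$ is unique and both fields are acyclic), so the correction to $\partial_k$ is a single product. I would prove the formula by induction on the length $\ell$ of $P$ (the number of arrows it uses). In the base case $\ell=0$ --- $\tau_0$ is a facet of $\sigma_0$ and the reversal introduces only the arrow $\tau_0\mapsto\sigma_0$ --- a direct count shows that every $\W$-path from a facet of $\sigma_j$ to $\tau_i$ is either an old $\V$-path, or else factors as a $\V$-path $\rho_1$ from a facet of $\sigma_j$ to $\tau_0$, followed by the arrow $\tau_0\mapsto\sigma_0$, followed by a gradient path $\rho_2$ leaving $\sigma_0$ through a facet other than $\tau_0$ and ending at $\tau_i$; the paths of the second kind contribute in total $a_{0j}\cdot(-a_{00})\cdot a_{i0}$, where the facet-sum over $\sigma_0$ closes up to $a_{i0}$ precisely because no gradient $\V$-path emanates from the $\V$-critical simplex $\tau_0$, and the sign $-a_{00}$ reflects the usual convention that traversing an arrow upward contributes a factor $-\langle\partial(\cdot),\cdot\rangle$ (together with $a_{00}^2=1$).

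For the inductive step I would factor the cancellation of $(\sigma_0,\tau_0)$ along a length-$\ell$ path $P$ into three moves: un-pair the first arrow of $P$ (say $\beta_0\mapsto\gamma_0$, with $\beta_0$ a facet of $\sigma_0$), obtaining a gradient field $\V_1$ in which $\beta_0$ and $\gamma_0$ become critical; cancel $(\gamma_0,\tau_0)$ along the remaining portion of $P$, a path of length $\ell-1$ from a facet of $\gamma_0$ to $\tau_0$ (inductive hypothesis); and cancel $(\sigma_0,\beta_0)$ along the trivial path $\beta_0\subset\sigma_0$ (base case). One checks that the resulting field is exactly $\W$, and that $\partial_k^{\V}$ is itself the base-case Gaussian elimination of $\partial_k^{\V_1}$ with respect to the pair $(\gamma_0,\beta_0)$; since each of these moves acts on the $k$-th boundary matrix by a Gaussian elimination with a $\pm1$ pivot, transitivity of Schur complements (equivalently, order-independence of iterated Gaussian elimination with unit pivots) identifies the composite yielding $\partial_k^{\W}$ with the single Gaussian elimination of $\partial_k^{\V}$ at the pivot $a_{00}$, that is, \[\partial_k^{\W}(\sigma_j)=\sum_{i=1}^m\big(a_{ij}-a_{00}a_{0j}a_{i0}\big)\tau_i.\] I note that the $\tau_0$-coefficient this formula would assign, $a_{0j}-a_{00}a_{0j}a_{00}=0$, vanishes automatically, consistent with $\tau_0\notin C_{k-1}^{\W}(\K)$. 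The step I expect to be the main obstacle is verifying the three cancellability claims in the intermediate gradient fields --- that the requisite gradient paths remain unique --- but these follow from the uniqueness of $P$ together with acyclicity, since any extra such path would close up, with a segment of $P$, into a closed gradient path. The matrix statement is then pure bookkeeping: the operation $R_i\mapsto R_i-a_{i0}a_{00}R_0$ replaces the $(i,j)$-entry $a_{ij}$ of $B$ by $a_{ij}-a_{i0}a_{00}a_{0j}$, and deleting the $0$-th row and the $0$-th column of the result realises the passage from the $\V$-critical simplices $\{\sigma_0,\dots,\sigma_n\}$, $\{\tau_0,\dots,\tau_m\}$ to the $\W$-critical ones $\{\sigma_1,\dots,\sigma_n\}$, $\{\tau_1,\dots,\tau_m\}$.
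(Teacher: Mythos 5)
Your parts (1)--(3) follow the same reasoning as the paper: the cancellation only alters arrows between dimensions $k-1$ and $k$, so the relevant trajectories for $q\neq k$ are untouched and only the critical sets in dimensions $k$ and $k-1$ change. For part (4), however, you take a genuinely different route. The paper argues directly for a path $P_0$ of arbitrary length: it classifies how trajectories can interact with $P_0$ (Observation~\ref{ob1}), proves a sign identity relating $\W$-trajectories $\sigma_j\to\sigma_0$ to $\V$-trajectories $\sigma_j\to\tau_0$ (Lemma~\ref{l3.3}), and then constructs an explicit weighted bijection $\phi:\T_1\times\T_2\to\Z_1\sqcup\Z_2$ whose signs yield $\sum_{\Z_1}w-\sum_{\Z_2}w=\bigl(\sum_{\T_1}w\bigr)\bigl(\sum_{\T_2}w\bigr)$ (Lemma~\ref{l3.4}); the formula drops out. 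You instead induct on the length of $P_0$: the combinatorial path-counting is confined to the length-zero case (where the only new arrow is $\tau_0\rightarrowtail\sigma_0$ and every new trajectory factors uniquely through it), and the general case is assembled from three elementary cancellations via the order-independence of iterated Gaussian elimination with unit pivots on the $2\times 2$ block indexed by $\{\alpha_1,\tau_0\}\times\{\beta_1,\sigma_0\}$ (all relevant pivots are $\pm1$, and the $(\tau_0,\sigma_0)$-entry of $\partial_k^{\V_1}$ is $0$, so the block is unimodular). This buys a cleaner separation of the sign bookkeeping (done once, in the easy case) from the linear algebra, at the cost of having to verify that the intermediate pairs $(\beta_1,\tau_0)$ in $\V_1$, $(\beta_1,\alpha_1)$ in $\V_1$, and $(\sigma_0,\alpha_1)$ in $\V_2$ are all cancellable and that the composite field equals $\W$ --- checks you correctly flag, which do go through by the uniqueness of $P_0$ plus acyclicity (any offending extra trajectory either closes up into a cycle or concatenates with a segment of $P_0$ to produce a second $\V$-trajectory from $\sigma_0$ to $\tau_0$); the last of these verifications, uniqueness in $\V_2$, needs the same care as the paper's Observation~\ref{ob1}(3) because $\V_2$ already contains reversed arrows. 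With those details supplied, your argument is a valid alternative proof, and arguably more modular, since it localizes all weight computations to a single elementary pairing.
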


	We also provide an analogous explicit formula (in Theorem \ref{T3.3}) for
	the modified coboundary operators. Finally in Appendix~\ref{A1}, we include an example of change in a certain boundary operator, where the cancellation technique is applied twice. 
	
	\section{Preliminaries}
	\subsection{Basics of simplicial complex and simplicial (co)homology}
	\begin{definition}[Simplicial complex]
		An (abstract) \emph{simplicial complex} $\K$ is a (finite, nonempty)
		collection of finite sets with the property that if $\sigma \in \K$ and $\tau
		\subseteq \sigma$, then $\tau \in \K$.
	\end{definition}
	We note that the empty set is always in $\K$. If $\sigma \in \K$, then
	$\sigma$ is called a \emph{simplex} or a \emph{face} of $\K$. The dimension of a simplex $\sigma$, $\operatorname{dim}(\sigma)=|\sigma|-1$. If $\operatorname{dim}(\sigma)=d$, we call $\sigma$ a $d$-\emph{dimensional simplex} (or simply, a
	$d$-\emph{simplex}). We denote a $d$-simplex $\sigma$ by $\sigma^{(d)}$ whenever
	necessary. The dimension of a simplicial complex $\K$, $\dim(\K)= \max\{\dim(\sigma):\sigma \in \K\}$. The \emph{vertex set} of a simplicial
	complex $\K$, $V(\K)$, is the collection of all elements in all the faces (i.e., $V(\K)= \cup_{\sigma \in \K}\sigma$). The
	elements of $V(\K)$ are called the \emph{vertices} of the complex $\K$. 
	
	An \emph{orientation} of a simplex is given by an ordering of its vertices,
	with two orderings defining the same orientation if and only if they differ by
	an even permutation. Let $V(\K)=\{x_0,x_1,\ldots,x_\ell \}$. We denote an
	oriented $q$-simplex $\sigma$ consisting of the vertices $x_{i_0}, x_{i_1},
	\ldots , x_{i_q}$, with $i_0 < i_1 < \ldots < i_q$, by $[x_{i_0}, x_{i_1},
	\ldots , x_{i_k}]$. In order to avoid notational complicacy, whenever necessary,
	we denote the oriented $q$-simplex $[x_{i_0}, x_{i_1}, \ldots , x_{i_q}]$ by
	$\sigma$ as well.
	
	\begin{definition}[Incidence number between simplices]
		Let $\sigma ^{(q)}=[ x_0,x_1,\ldots,x_q]$ and $\tau^{(q-1)}$ be oriented
		simplices of a simplicial complex $\K$. Then the \emph{Incidence number} between
		$\sigma$ and $\tau$ is denoted by $\langle \sigma, \tau \rangle$, and defined by
		\[\langle\sigma,\tau\rangle =\begin{cases}
			(-1)^i, & \text{ if } \tau=[x_0,\dots,\widehat{x}_i,\ldots, x_{q}], \\ 
			~~~~~0, & \text{ if } \tau \nsubseteq \sigma,	
		\end{cases}\]
		where
		$[x_0,\ldots,\widehat{x}_i,\ldots,x_q]$ is the $(q-1)$-simplex
		obtained from $\sigma$ after deleting $x_i$, with the induced orientation.
	\end{definition}
	
	For an integer $q$, let $S_q(\K)$ be the set of all $q$-dimensional simplices
	of $\K$, and $C_q(\K)$ be the free abelian group generated by $S_q(\K)$. So if
	$|S_q(\K)|=m$, then $C_q(\K)\cong\mathbb{Z}^m$. If $\dim(\K)=d$, for $q<0$ or $q>d$, then
	$C_q(\K)=\{0\}$.  We define the homomorphisms
	$\partial_q:C_{q}(\K)\rightarrow C_{q-1}(\K)$ and $\delta_q:
	C_{q-1}(\K)\rightarrow C_{q}(\K)$ by linearly extending the following maps.
	\[\partial_q(\sigma)=\sum_{\tau\in S_{q-1}(\K)}\langle\sigma,\tau \rangle \cdot
	\tau, \text{ and } \delta_q(\alpha)=\sum_{\beta\in
		S_{q}(\K)}\langle\beta,\alpha\rangle \cdot \beta,\]
	where $\sigma \in S_q$ and $\alpha \in S_{q-1}$.	
	
	So if, $\dim(\K)=d$ we have two chain complexes,
	\[\dots\rightarrow\{0\}\xrightarrow{\partial_{d+1}}
	C_d(\K)\xrightarrow{\partial_d}C_{d-1}(\K)\xrightarrow{\partial_{d-1}}\dots\xrightarrow{\partial_1}C_0(\K)\xrightarrow{\partial_0}\{0\}
	\rightarrow \dots\]
	\[\dots\rightarrow\{0\}\xrightarrow{\delta_0}
	C_0(\K)\xrightarrow{\delta_1}C_{1}(\K)\xrightarrow{\delta_{2}}\dots
	\xrightarrow{\delta_{d}}C_d(\K)\xrightarrow{\delta_{d+1}}\{0\} \rightarrow
	\dots\]
	The first one is the \emph{simplicial chain complex} and the second one is
	the \emph{simplicial cochain complex} of $\K$. For $q \in \mathbb{N}$,
	$\partial_q$ is called the \emph{$q$-th boundary map} of the simplicial chain
	complex and $\delta_q$ is called the \emph{$q$-th coboundary map} of the
	simplicial cochain complex of $\K$.
	The quotient groups
	$H_q(\K)=\sfrac{\operatorname{Ker}(\partial_q)}{\operatorname{Im}(\partial_{q+1})}$
	and
	$H^q(\K)=\sfrac{\operatorname{Ker}(\delta_{q+1})}{\operatorname{Im}(\delta_{q})}$
	are the \emph{$q$-th homology} and \emph{cohomology group} of $\K$,
	respectively.
	
	For further reading on the topics introduced in this subsection, we refer to
	\textit{Elements of Algebraic Topology}~\cite{munkres} by Munkres. 
	\subsection{Basics of discrete Morse theory}
	First, we introduce the notion of gradient vector field and some related concepts following \cite{forman1998, formancohom, forman2002}.
	\begin{definition}[Discrete vector field]
		A discrete vector field $\V$ on a simplicial complex $\K$ is a collection of
		ordered pairs of simplices of the form $(\alpha,\beta)$, such that 
		\begin{enumerate}[(i)]
			\item $\alpha \subsetneq \beta$,
			\item $\dim(\beta)=\dim(\alpha)+1$,
			\item each simplex of $\K$ is in \emph{at most} one pair of $\V$.
		\end{enumerate}
	\end{definition}
	
	If the simplex $\alpha^{(q-1)}$ is paired off with the simplex $\beta^{(q)}$ in
	$\V$ (i.e., $(\alpha, \beta) \in \V$), then we diagrammatically represent
	the pair as $\alpha \rightarrowtail \beta$ (or $\beta \leftarrowtail \alpha$).
	
	\begin{definition}[$\V$-trajectory]
		Given a discrete vector field $\V$ on a complex $\K$, a \emph{$\V$-trajectory}
		from a $q$-simplex, say $\beta_0$, to a $q$-simplex, say $\beta_r$, (or,
		alternatively, to a $(q-1)$-simplex $\alpha_{r+1}$)  is a sequence of simplices
		\[\beta_0^{(q)}, \alpha_1^{(q-1)}, \beta_1^{(q)}, \ldots, \alpha_r^{(q-1)},
		\beta_r^{(q)}
		\text{ (or, }\beta_0^{(q)}, \alpha_1^{(q-1)}, \beta_1^{(q)},\ldots,
		\alpha_r^{(q-1)}, \beta_r^{(q)}, \alpha_{r+1}^{(q-1)})\]
		such that for each $i \in \{1,\ldots,r\}$, the pair $(\alpha_i,\beta_i) \in
		\V$ and $(\alpha_i\ne)$ $\alpha_{i+1}$ $ \subsetneq \beta_i$, and $\alpha_1 \subsetneq
		\beta_0$.
	\end{definition}	
	
	We represent such a $\V$-trajectory $P$ as follows:
	\[P:\beta_0^{(q)}\rightarrow \alpha_1^{(q-1)}\rightarrowtail
	\beta_1^{(q)}\rightarrow \cdots\rightarrow \alpha_r^{(q-1)}\rightarrowtail
	\beta_r^{(q)}\]
	\[(\text{or, } P:\beta_0^{(q)}\rightarrow \alpha_1^{(q-1)}\rightarrowtail
	\beta_1^{(q)}\rightarrow \cdots\rightarrow \alpha_r^{(q-1)}\rightarrowtail
	\beta_r^{(q)} \rightarrow \alpha_{r+1}^{(q-1)})\]
	(here $``\rightarrow"$ represents set inclusion). Such a trajectory is said to be a \emph{nontrivial closed $\V$-trajectory} if
	$r > 0$ and $\beta_{r} = \beta_0$.
	
	\begin{definition} [co-$\V$-trajectory]
		A \emph{co-$\V$-trajectory} from a $q$-simplex, say $\beta_0$, to a
		$q$-simplex, say $\beta_r$, (or, alternatively to a $(q+1)$-simplex
		$\tau_{r+1}$)  is a sequence of simplices
		\[\beta_0^{(q)}, \tau_1^{(q+1)}, \beta_1^{(q)}, \ldots, \tau_r^{(q+1)},
		\beta_{r}^{(q)}
		\text{ (or, }\beta_0^{(q)}, \tau_1^{(q+1)}, \beta_1^{(q)}, \ldots,
		\tau_r^{(q+1)}, \beta_{r}^{(q)}, \tau_{r+1}^{(q+1)})\]
		such that for each $i \in \{1,\ldots,r\}$, the pair $(\beta_i,\tau_i) \in \V$
		and $(\beta_i \ne)$ $\beta_{i-1} \subsetneq \tau_{i}$, and $\beta_r \subsetneq
		\tau_{r+1}$.
	\end{definition}
	
	We represent such a co-$\V$-trajectory $Q$ as follows:
	\[Q:\beta_0^{(q)} \leftarrow \tau_1^{(q+1)}\leftarrowtail
	\beta_1^{(q)}\leftarrow \cdots\leftarrow \tau_r^{(q+1)}\leftarrowtail
	\beta_{r}^{(q)}\]
	\[(\text{or, }Q:\beta_0^{(q)}\leftarrow \tau_1^{(q+1)}\leftarrowtail
	\beta_1^{(q)}\leftarrow \cdots\leftarrow \tau_r^{(q+1)}\leftarrowtail
	\beta_{r}^{(q)} \leftarrow \tau_{r+1}^{(q+1)}).\]
	Such a trajectory is said to be a \emph{nontrivial closed 
		co-$\V$-trajectory} if $r > 0$ and $\beta_{r} = \beta_0$.
	
     When
	$P:\beta_0^{(q)}\rightarrow \alpha_1^{(q-1)}\rightarrowtail
	\beta_1^{(q)}\rightarrow \cdots\rightarrow \alpha_r^{(q-1)}\rightarrowtail
	\beta_r^{(q)}$ is a $\V$-trajectory, the \emph{weight} of $P$ is defined by
	\[w_{\V}(P)=\prod_{i=1}^{r}\left(-\langle \beta_{i-1},\alpha_i\rangle \langle
	\beta_i, \alpha_i \rangle \right).\]
	Similarly, when $P:\beta_0^{(q)}\rightarrow \alpha_1^{(q-1)}\rightarrowtail
	\beta_1^{(q)}\rightarrow \cdots\rightarrow \alpha_r^{(q-1)}\rightarrowtail
	\beta_r^{(q)} \rightarrow \alpha_{r+1}^{(q-1)}$,
	\[ w_{\V}(P)=\left(\prod_{i=1}^{r}\left(-\langle \beta_{i-1},\alpha_i\rangle
	\langle \beta_i, \alpha_i \rangle \right) \right)\cdot \langle \beta_r,
	\alpha_{r+1}\rangle .\]
 Abusing the notation, we drop the parameter $\V$ from $w_{\V}(P)$ 	for the sake of simplicity; it should be understood from the context.
	
	When $Q:\beta_0^{(q)} \leftarrow \tau_1^{(q+1)}\leftarrowtail
	\beta_1^{(q)}\leftarrow \cdots\leftarrow \tau_r^{(q+1)}\leftarrowtail
	\beta_{r}^{(q)}$ is a co-$\V$-trajectory, the \emph{weight} of $Q$ is defined by
	\[w(Q)=\prod_{i=1}^{r}(-\langle \tau_i, \beta_{i-1}\rangle \langle \tau_i
	,\beta_i \rangle)\]
	Similarly, when $Q:\beta_0^{(q)}\leftarrow \tau_1^{(q+1)}\leftarrowtail
	\beta_1^{(q)}\leftarrow \cdots\leftarrow \tau_r^{(q+1)}\leftarrowtail
	\beta_{r}^{(q)} \leftarrow \tau_{r+1}^{(q+1)}$,
	\[ w(Q)= \left( \prod_{i=1}^{r}(-\langle \tau_i, \beta_{i-1}\rangle \langle
	\tau_i ,\beta_i \rangle) \right) \cdot \langle \tau_{r+1}, \beta_{r}\rangle.\]
	The (co-)trajectory $P$ is said to be \emph{trivial}
	if $r=0$. In this case, $w(P)$ is defined to be $1$. We note that, the weight of any (co-)trajectory is either 1 or $-1$.
	
	We observe that, for a given discrete vector field $\V$ on a simplicial complex
	$\K$, there exists no nontrivial closed $\V$-trajectory if and only if there
	exists no nontrivial closed co-$\V$-trajectory.
	\begin{definition}[Gradient vector field]
		A \emph{gradient vector field} on a complex $\K$ is a discrete vector field
		$\V$ on $\K$ which does not admit nontrivial closed $\V$-trajectories
		(equivalently, nontrivial closed co-$\V$-trajectories).
	\end{definition}
	We remark that a gradient vector field is also referred to as an \emph{acyclic
		matching} in literature~\cite{chari}. We may also call a $\V$-trajectory a
	\emph{gradient trajectory}, when $\V$ is a gradient vector field.
	
	Let $P: \eta_0,\eta_1,\ldots,\eta_r$ be a gradient trajectory (or, a sequence of
	simplices in general). For $(0\leq) $ $i <j $ $(\leq r)$, we denote the (sub)sequence of
	simplices from $\eta_i$ to $\eta_j$ by $\eta_iP\eta_j$, i.e., 
	\[\eta_iP\eta_j: \eta_i,\eta_{(i+1)},\ldots,\eta_j.\]  
	Now, suppose
	\[P_1: \eta_0, \eta_1,\ldots,\eta_i,\ldots,\eta_r, \text{ and }
	P_2: \eta_0^\prime, \eta_1^\prime,\ldots,\eta_j^\prime,\ldots,\eta_s^\prime\]
	are two gradient trajectories, such that $\eta_i=\eta_j^\prime$. For $\ell\leq
	i$ and $t\geq j$, we denote the concatenation of $\eta_{\ell}P_1\eta_i$ and
	$\eta_{j}^\prime P_2\eta_t^\prime$  by $\eta_{\ell}P_1\eta_iP_2\eta_t^\prime$
	(or, $\eta_{\ell}P_1\eta^\prime_jP_2\eta^\prime_t$), i.e., 
	\[\eta_{\ell}P_1\eta_iP_2\eta_t^\prime:
	\eta_{\ell},\eta_{\ell+1},\ldots,\eta_i(=\eta_j^\prime),\eta_{j+1}^\prime,\ldots,\eta^\prime_t.\]
	
	We use similar intuitive notation, e.g., $\eta_{\ell}P_1\eta_jP_2\eta_t^\prime
	P_3 \eta^{\prime\prime}_u$ (defined associatively), for three or more
	trajectories or sequences.
	\begin{definition}[Critical simplex]
		Let $\V$ be a gradient vector field on a complex $\K$. A nonempty simplex
		$\sigma$ is said to be a \emph{$\V$-critical simplex} (or, a \emph{critical
			simplex}, when the gradient vector field is clear from the context) if one of the following
		holds:
		\begin{enumerate}[(i)]
			\item $\sigma$ does not appear in any pair of $\V$, or
			\item $\sigma$ is a $0$-simplex and $(\emptyset,\sigma) \in \V$.
		\end{enumerate}
	 Also, $\operatorname{Crit}_q^{\V}(\K)$ is the set of all $q$-dimensional $\V$-critical simplices.
	\end{definition}
	The following important theorem presents a method for reducing  the number of critical
	simplices in a given gradient vector field on a simplicial complex.
	\begin{theorem}[Cancellation of a pair of critical simplices,
		\cite{forman1998, forman2002}] \label{T2.7}
		Suppose $\V$ is a gradient vector field
		on a $d$-dimensional simplicial complex $\K$, such that $\sigma^{(k)}$ and
		$\tau^{(k-1)}$ are critical, for some $k \in \{1,\cdots,d\}$. If there is a
		unique $\V$-trajectory from $\sigma^{(k)}$ to $\tau^{(k-1)}$ (we call such a
		pair $(\sigma^{(k)},\tau^{(k-1)})$ a `cancellable critical pair'), then there is
		a
		gradient vector field $\W$ on $\K$ such that the critical
		simplices with respect to $\W$ remain the same, except that
		$\sigma$ and $\tau$ are no longer critical. Moreover, $\W$ is
		same as $\V$ except along the unique $\V$-trajectory from $\sigma$ to 
		$\tau$.
	\end{theorem}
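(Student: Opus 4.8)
The plan is to build everything on the standard description of the Morse boundary as a weighted count of gradient trajectories: for $\V$-critical simplices $\sigma^{(q)}$ and $\mu^{(q-1)}$, the coefficient of $\mu$ in $\partial_q^{\V}(\sigma)$ equals $\sum_{P} w(P)$, the sum ranging over all $\V$-trajectories $P$ from $\sigma$ to $\mu$. In particular $a_{ij}$ is the total weight of the $\V$-trajectories from $\sigma_j$ to $\tau_i$, and since $(\sigma_0,\tau_0)$ is cancellable there is a unique $\V$-trajectory $\rho$ from $\sigma_0$ to $\tau_0$, whence $a_{00}=w(\rho)\in\{\pm1\}$ and $a_{00}^{-1}=a_{00}$ (this is what makes the row operation elementary and integral). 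First I would record explicitly, from Theorem~\ref{T2.7}, the pairing of $\W$ along $\rho\colon \sigma_0\to\alpha_1\rightarrowtail\beta_1\to\cdots\to\alpha_r\rightarrowtail\beta_r\to\tau_0$: namely $\W$ replaces the pairs $(\alpha_i,\beta_i)$ by $(\alpha_1,\sigma_0),(\alpha_2,\beta_1),\dots,(\alpha_r,\beta_{r-1}),(\tau_0,\beta_r)$, and agrees with $\V$ elsewhere; consequently the critical simplices change only in dimensions $k$ and $k-1$, losing exactly $\sigma_0$ and $\tau_0$.

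Parts (1)--(3) then follow from a dimension count of which pairings the relevant trajectories can use. For (1), a trajectory contributing to $\partial_q$ only ever uses pairs among simplices of dimensions $q$ and $q-1$; when $q>k+1$ or $q<k-1$ none of these lie in $\{k-1,k\}$, so neither the pairs nor the critical sets in dimensions $q,q-1$ are disturbed and $\partial_q^{\W}=\partial_q^{\V}$. For (2) ($q=k+1$) the only pairings that matter are the critical $(k+1)$-simplices and the $k$-simplices matched upward; the cancellation only re-pairs $k$-simplices downward, so these are untouched, every $\V$-trajectory out of a critical $(k+1)$-simplex $\beta$ survives verbatim, and the sole change is that $\sigma_0$ is no longer matched upward and no longer an admissible (critical) terminus, so trajectories can neither end at it nor be routed through it; dropping its term gives $\partial_{k+1}^{\W}(\beta)=\sum_{j=1}^{n}b_j\sigma_j$. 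Part (3) ($q=k-1$) is dual: the pairings that matter are the $(k-1)$-simplices matched downward and the critical $(k-2)$-simplices, all untouched (every segment $(k-1)$-simplex is matched upward in both $\V$ and $\W$), so the trajectories out of each surviving critical $(k-1)$-simplex $\tau_i$ ($i\geq1$) are unchanged, giving $\partial_{k-1}^{\W}=\partial_{k-1}^{\V}\big|_{C_{k-1}^{\W}(\K)}$.

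The heart is Part (4), where both dimensions $k$ and $k-1$ are perturbed. I would write the $\tau_i$-coefficient of $\partial_k^{\W}(\sigma_j)$ (for $i,j\geq1$) as the total weight of all $\W$-trajectories from $\sigma_j$ to $\tau_i$, and compare with $a_{ij}$, the corresponding $\V$-total. Trajectories disjoint from $\rho$ use only pairs common to $\V$ and $\W$ and therefore contribute identically to both sums. For the remainder, I would use acyclicity of $\W$ (no nontrivial closed $\W$-trajectory) to see that $\tau_0$, and indeed each simplex of $\rho$, is visited at most once, and use the uniqueness of $\rho$ to control how a trajectory can enter and leave the reversed segment. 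The claim to isolate is that, in the difference (sum over $\W$-trajectories) $-$ (sum over $\V$-trajectories), every trajectory that meets $\rho$ only partially cancels against a $\V$-trajectory traversing one of the deleted pairs $(\alpha_i,\beta_i)$, leaving exactly the $\W$-trajectories that run through the \emph{entire} reversed segment from $\tau_0$ up to $\sigma_0$. Such a full-traversal trajectory factors uniquely as a $\V$-trajectory $\sigma_j\rightsquigarrow\tau_0$, followed by the reversed $\rho$, followed by a $\V$-trajectory $\sigma_0\rightsquigarrow\tau_i$; a direct splicing computation of weights at the two junctions (the reversed segment contributing $w(\rho)=a_{00}$ and the two joins contributing an overall sign) evaluates this contribution to $-a_{00}a_{0j}a_{i0}$. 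Summing, the $\tau_i$-coefficient of $\partial_k^{\W}(\sigma_j)$ is $a_{ij}-a_{00}a_{0j}a_{i0}$, as claimed.

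I expect the cancellation of the partially-interacting trajectories to be the main obstacle, since a $\W$-trajectory may enter and exit the reversed segment at several interior faces and need not traverse it completely. I would make this rigorous by induction on the length $r$ of $\rho$: the base case $r=0$ (where $\rho$ is the single incidence $\tau_0\subset\sigma_0$, $a_{00}=\langle\sigma_0,\tau_0\rangle$, and no pair is deleted) reduces to a short direct computation exactly of the kind above, while the inductive step peels off the last reversed arrow $(\tau_0,\beta_r)$ and applies the hypothesis to the shorter path; alternatively, one can exhibit the sign-reversing involution pairing each partial-traversal $\W$-trajectory with the $\V$-trajectory obtained by rerouting it through the deleted pair. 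Finally, the arithmetic identity $a_{ij}\mapsto a_{ij}-a_{i0}a_{00}a_{0j}$ (valid for all $i,j$, and zeroing the $0$-th column since $a_{00}^2=1$) is precisely the simultaneous elementary row operation $R_i\mapsto R_i-a_{i0}a_{00}R_0$ for $i\geq1$ applied to the matrix $B$ of $\partial_k^{\V}$; deleting the $0$-th row and column of the result $B'$ then yields the matrix of $\partial_k^{\W}$, which is the stated reformulation.
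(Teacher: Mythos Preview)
Your proposal addresses the wrong statement. Theorem~\ref{T2.7} is the classical Forman cancellation theorem: it asserts that reversing the unique $\V$-trajectory $P_0$ from $\sigma$ to $\tau$ yields a discrete vector field $\W$ that is still a \emph{gradient} vector field (i.e., admits no nontrivial closed trajectory) and whose critical set differs from that of $\V$ only by the removal of $\sigma$ and $\tau$. You invoke this theorem as a given (``from Theorem~\ref{T2.7}, the pairing of $\W$ along $\rho$\dots'') and then argue Parts~(1)--(4), which are the content of Theorem~\ref{T1.1}, the paper's main result about Morse boundary operators. The paper's own proof of Theorem~\ref{T2.7} is the short sketch immediately following its statement: one defines $\W$ by deleting the pairs $(\alpha_i,\beta_i)$ and inserting the pairs $(\alpha_{i+1},\beta_i)$ along $P_0$, and then argues that acyclicity of $\W$ follows from the uniqueness of $P_0$ (a putative closed $\W$-trajectory would have to use one of the new pairs and, upon unwinding, would produce a second $\V$-trajectory from $\sigma$ to $\tau$). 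Your first paragraph records the construction of $\W$ but does not supply this acyclicity argument, which is the actual content of the theorem.

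If one reads your proposal instead as a proof of Theorem~\ref{T1.1}: Parts~(1)--(3) agree with the paper's reasoning. For Part~(4), however, your heuristic that ``partial'' interactions with $\rho$ cancel pairwise and only ``full-traversal'' $\W$-trajectories (those passing through all of $\tau_0,\beta_r,\dots,\beta_0=\sigma_0$) survive is not correct as stated. In the paper (Lemma~\ref{l3.3} and Lemma~\ref{l3.4}), the correction term $-a_{00}a_{0j}a_{i0}$ comes from an explicit bijection $\phi\colon \T_1\times\T_2 \to \Z_1\sqcup\Z_2$ between pairs (a $\W$-trajectory $\sigma_j\to\sigma_0$, a $\V$-trajectory $\sigma_0\to\tau_i$) and the set of $\W$- and $\V$-trajectories $\sigma_j\to\tau_i$ touching $P_0$; the $\W$-trajectories in $\Z_1$ generically traverse only an interior sub-segment $\alpha_t\to\beta_s$ (with $s<t$) of the reversed $P_0$, not the whole of it, so your proposed factorization through $\tau_0$ and $\sigma_0$ does not describe them and the concatenation you write down is typically not even a valid $\W$-trajectory (simplices of $P_0$ would be repeated). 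Your fallback suggestions (induction on the length of $\rho$, or a sign-reversing involution) might be developed into a correct argument, but the cancellation step as you describe it does not go through.
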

	A sketch of a proof is as follows. Let the unique $\V$-trajectory $P$ from to
	$\sigma$ to $\tau$ be
	\[P:(\sigma^{(k)}=)\beta_0^{(k)},\alpha_1^{(k-1)}, \beta_1^{(k)}, \ldots,
	\alpha_r^{(k-1)}, \beta_r^{(k)}, \alpha_{r+1}^{(k-1)}(=\tau^{(k-1)}).\]

	We get $\W$ from $\V$ by removing $(\alpha_i, \beta_i)$, for $i \in \{1, \dots , r\}$, and adding $(\alpha_{i+1},\beta_i)$, for $ i \in \{0, \dots , r\}$, i.e.,
	\[\W = (\V \setminus \{(\alpha_i, \beta_i) : i \in \{1, \dots , r\}\})
	\sqcup\{(\alpha_{i+1},\beta_i): i \in \{0, \dots , r\}\}\]
	($\sqcup$ denotes the union of disjoint sets).
	
		In other words we have the following diagram:
	\[P:(\sigma^{(k)}=)\beta_0^{(k)}\rightarrow \alpha_1^{(k-1)}\rightarrowtail
	\beta_1^{(k)}\rightarrow \cdots\rightarrow \alpha_r^{(k-1)}\rightarrowtail
	\beta_r^{(k)} \rightarrow \alpha_{r+1}^{(k-1)}(=\tau^{(k-1)}).\]
	
	We obtain $\W$ from $\V$ by flipping the arrows, with `$\rightarrow$' becoming
	`$\leftarrowtail$' and `$\rightarrowtail$' becoming `$\leftarrow$', as follows.
	\[(\sigma^{(k)}=)\beta_0^{(k)}\leftarrowtail \alpha_1^{(k-1)}\leftarrow
	\beta_1^{(k)}\leftarrowtail \cdots\leftarrowtail \alpha_r^{(k-1)}\leftarrow
	\beta_r^{(k)} \leftarrowtail \alpha_{r+1}^{(k-1)}(=\tau^{(k-1)}).\]

	The uniqueness of the
	$\V$-trajectory from $\sigma$ to $\tau$ guarantees that $\W$ is also a gradient
	vector field on $\K$. This also implies that $\sigma$ and $\tau$ are not
	critical with respect to $\W$,
	while the criticality of all other simplices remains unchanged.
	
	The following allows us to apply the technique above to cancel several
	pairs of critical simplices simultaneously.
	
	\begin{theorem} \cite{Hersh} \label{t2.9}
		Let $\V$ be a gradient vector field on a complex $\K$ such that, for $i \in
		\{1,\dots, t\}$,
		there is a unique $\V$-trajectory $P_i$ from a critical $k_i$-simplex
		$\sigma_i$ to the critical
		$(k_{i}-1)$-simplex $\tau_i$. If there is no non-identity permutation $\pi$
		of $t$ elements such that there is a $\V$-trajectory from $\sigma_i$ to
		$\tau_{\pi(i)}$, for all $i \in \{1,\cdots , t\}$, then reversing all the
		$\V$-trajectories $P_i$
		(to cancel the critical pair $\tau_i$ and $\sigma_i$) would still produce a
		gradient vector field on $\K$.	
	\end{theorem}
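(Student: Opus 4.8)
The plan is to pass to the language of the modified Hasse diagram. Let $G_{\V}$ be the directed graph on the simplices of $\K$ having an up-edge $\alpha\to\beta$ for each pair $(\alpha,\beta)\in\V$ and a down-edge $\beta\to\alpha$ for each codimension-one incidence $\alpha\subsetneq\beta$ that is not a pair. In this language a $\V$-trajectory is a directed path, $\V$ is a gradient vector field iff $G_{\V}$ is acyclic, and reversing $P_i$ is exactly flipping every edge along the corresponding path; I write $\bar P_i$ for the flipped path, which runs from $\tau_i$ to $\sigma_i$ in $G_{\W}$. Two local facts drive the argument: first, up-steps are forced by the pairing, so in $G_{\W}$ every $(k{-}1)$-simplex of $\bar P_i$ has a unique out-edge (to its new partner) and every $k$-simplex of $\bar P_i$ a unique in-edge; second, fixing a topological order $h_{\V}$ of $G_{\V}$ makes every $\V$-edge increase $h_{\V}$ and every reversed edge decrease it, with $h_{\V}$ rising from its minimum $\sigma_i$ to its maximum $\tau_i$ along each $P_i$. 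Since any closed trajectory lives in a single pair of consecutive dimensions $(q,q{-}1)$ and only the reversals with $k_i=q$ touch that pair, it suffices to treat one dimension $k$ at a time; moreover any permutation admitting trajectories $\sigma_i\rightsquigarrow\tau_{\pi(i)}$ must preserve the dimension of the pairs, so a permutation produced within a single dimension is automatically admissible against the hypothesis.

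The first substantive step is to show the hypothesis forces $P_1,\dots,P_t$ to be pairwise vertex-disjoint, which in particular makes $\W$ a well-defined discrete vector field (the new pairs on distinct, disjoint paths cannot clash, nor can they clash with the retained old pairs). Suppose $P_i$ and $P_{i'}$, with $i\ne i'$, share a $k$-simplex $\beta^{*}=\beta^i_c=\beta^{i'}_{c'}$; its unique $\V$-partner forces $\alpha^i_c=\alpha^{i'}_{c'}$, so concatenating the initial part $\sigma_i\rightsquigarrow\beta^{*}$ of $P_i$ with the terminal part $\beta^{*}\rightsquigarrow\tau_{i'}$ of $P_{i'}$ yields a $\V$-trajectory $\sigma_i\rightsquigarrow\tau_{i'}$ (the junction is legal because $\alpha^{i'}_{c'+1}\ne\alpha^{i'}_{c'}=\alpha^i_c$), and symmetrically $\sigma_{i'}\rightsquigarrow\tau_i$. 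Together with $P_j$ for $j\ne i,i'$ this realizes the transposition $(i\ i')$, contradicting the hypothesis. The same argument covers coinciding endpoints, and a shared $(k{-}1)$-simplex forces a shared $k$-simplex through the pairing; hence the paths are disjoint.

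Now assume for contradiction a nontrivial closed $\W$-trajectory $C$. Cutting $C$ at the transitions between reversed and unreversed edges, I would write it cyclically as $R_1S_1R_2S_2\cdots R_pS_p$, where each $R_l$ is a maximal run of reversed edges (lying on a single $\bar P_{i_l}$, by disjointness) and each $S_l$ is a directed path in $G_{\V}$; acyclicity of $\V$ gives $p\ge1$. The forced-step structure shows that $R_l$ can only begin at a $(k{-}1)$-simplex $\alpha^{i_l}_{a_l}$ of $\bar P_{i_l}$ and can only end at a $k$-simplex $e_l$ of $\bar P_{i_l}$ (with $e_l=\sigma_{i_l}$ the extreme case). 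The key construction is to manufacture, for each $l$, a genuine $\V$-trajectory $T_l\colon\sigma_{i_l}\rightsquigarrow\tau_{i_{l+1}}$ by prepending to $S_l$ the forward segment of $P_{i_l}$ from $\sigma_{i_l}$ to $e_l$ and appending the forward segment of $P_{i_{l+1}}$ from $\alpha^{i_{l+1}}_{a_{l+1}}$ to $\tau_{i_{l+1}}$; the junctions are legal, and since every edge strictly increases $h_{\V}$ the concatenation is simple, hence a bona fide trajectory. This yields a closed walk $i_1\to i_2\to\cdots\to i_p\to i_1$ on the index set, carrying a $\V$-trajectory $\sigma_i\rightsquigarrow\tau_j$ for each edge $i\to j$.

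It remains to extract a nonidentity permutation, and I expect this to be the main obstacle. The closed walk has equal in- and out-degree at every index, so the associated directed multigraph decomposes into directed cycles; selecting one cycle and extending by the identity gives a permutation $\pi$ with trajectories $\sigma_i\rightsquigarrow\tau_{\pi(i)}$ for all $i$. The delicate point is ruling out $\pi=\mathrm{id}$, i.e. ruling out a self-loop $i_l\to i_l$. A self-loop gives $T_l\colon\sigma_i\rightsquigarrow\tau_i$, which by uniqueness of $P_i$ must equal $P_i$; tracing this equality through the concatenation forces $S_l$ to be a forward segment of $P_i$ that retraces the vertices of $R_l$, so that $C$ would contain $\beta^i_{a-1}\to\alpha^i_{a}\rightarrowtail\beta^i_{a-1}$ — an immediate return to the same $\beta$, which forces two consecutive down-targets to coincide and violates the defining condition $\alpha_{j+1}\ne\alpha_j$ of a trajectory. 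Thus there are no self-loops, every cycle has length at least two, and $\pi$ is a nonidentity admissible permutation, contradicting the hypothesis. The disjointness reduction of the second step is precisely what keeps this final reconciliation of a partially-traversed reversed path with the uniqueness and no-permutation hypotheses manageable.
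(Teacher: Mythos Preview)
The paper does not prove this theorem; it is quoted from Hersh without argument, so there is no in-paper proof to compare against. Your approach---pass to the modified Hasse diagram, establish pairwise disjointness of the $P_i$ so that $\W$ is a well-defined discrete vector field, cut a hypothetical closed $\W$-trajectory at the reversed/unreversed transitions, splice each unreversed run $S_l$ with forward segments of $P_{i_l}$ and $P_{i_{l+1}}$ to obtain genuine $\V$-trajectories $T_l\colon\sigma_{i_l}\rightsquigarrow\tau_{i_{l+1}}$, and extract a forbidden nonidentity permutation from the resulting closed walk on indices---is the standard one and is correct in outline and in almost every detail.

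Two small points. First, your self-loop elimination is slightly garbled. The clean version: if $i_l=i_{l+1}=i$ and $e_l=\beta^{i}_{c}$, then $T_l=P_i$ would force the first edge of $S_l$ to be $\beta^{i}_{c}\to\alpha^{i}_{c+1}$; but in $\W$ this incidence is the \emph{up}-edge $\alpha^{i}_{c+1}\rightarrowtail\beta^{i}_{c}$ (a reversed edge), hence cannot lie in the unreversed run $S_l$. Equivalently, $C$ would read $\ldots,\alpha^{i}_{c+1},\beta^{i}_{c},\alpha^{i}_{c+1},\ldots$, violating the condition $\alpha_{j+1}\neq\alpha_j$. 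The phrase ``retraces the vertices of $R_l$'' is unnecessary and not literally accurate when $a_{l+1}>a_l$.

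Second, your disjointness step treats only pairs in the same dimension. When $k_{i'}=k_i-1$, interiors cannot meet (a shared simplex would be $\V$-paired both upward and downward), but the endpoints can: $\tau_i=\sigma_{i'}$ is not excluded by the permutation hypothesis, since no $\V$-trajectory can drop two dimensions from $\sigma_i$ to $\tau_{i'}$. In that situation $\W$ pairs the common simplex twice and is not even a discrete vector field. This is really an imprecision in the theorem statement as quoted rather than a flaw in your argument; it is resolved by the tacit assumption that the $2t$ critical simplices $\sigma_1,\ldots,\sigma_t,\tau_1,\ldots,\tau_t$ are pairwise distinct.
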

    
     Here we remark that Benedetti, Lutz, et al.\ proposed a methodology for searching optimal discrete gradient vector
     fields with a random heuristic~\cite{adi,benedetti1,benedetti2}. This approach turned out to be successful, even in some cases with a large input
     size.	
	\subsection{Morse complex and co-Morse complex}
	
	For a given gradient vector field $\V$ on a simplicial complex $\K$, this
	section introduces two chain complexes: the (discrete) Morse complex and the
	co-Morse complex. The Morse complex is homotopy equivalent to the simplicial
	chain complex, while the co-Morse complex is homotopy equivalent to the
	simplicial co-chain complex of $\K$. Consequently, their homology and cohomology
	groups are identical. Both of these chain complexes are generated by the
	$\V$-critical simplices of $\K$. It
	simplifies the computation of homology and cohomology compared to the original
	chain and co-chain complexes.
	We remark that the Morse complex is also referred to as \emph{combinatorial
		Thom--Smale Complex} in literature \cite{gallais}.
	
	Let $C_q^{\V}{(\K)}$ be the free abelian group
	generated by $\operatorname{Crit}_q^{\V}(\K)$. We define homomorphisms,
	$\partial_q^{\V}:C_q^{\V}(\K) \rightarrow C_{q-1}^{\V}(\K)$ and
	$\delta_q^{\V}:C_{q-1}^{\V}(\K) \rightarrow C_{q}^{\V}(\K)$ by linearly
	extending the following maps.

	\[\partial^{\V}_q(\beta)= \sum_{\alpha \in \operatorname{Crit^{\V}_{q-1}}(\K)}
	\left( \sum_{\substack {P:P \text{ is a }\\ \V \text{-traj. from } \\ \beta
			\text{ to } \alpha }} w(P) \right) \cdot \alpha, ~~~~\delta^{\V}_q(\sigma)=
	\sum_{\tau \in \operatorname{Crit^{\V}_{q}}(\K)} \left( \sum_{\substack {Q: Q
			\text{ is a }\\ \text{co-}\V\text{-traj. from } \\ \sigma \text{ to } \tau }}
	w(Q) \right) \cdot \tau,\] 
	where $\beta \in \operatorname{Crit^{\V}_{q}}(\K)$ and $\sigma\in
	\operatorname{Crit^{\V}_{q-1}}(\K)$.
	
	\begin{lemma}\cite{formancohom, gallais} \label{2.4}for all $q\in
		\mathbb{N}$,
		$\mathrm{(i)}$ $\partial_{q-1}^{\V}\circ \partial_{q}^{\V}=0$,
		$\mathrm{(ii)}$	$\delta_{q}^{\V}\circ \delta_{q-1}^{\V}=0$.
		
	\end{lemma}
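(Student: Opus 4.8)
The plan is to realize the Morse complex $\bigl(C_*^{\V}(\K),\partial_*^{\V}\bigr)$ as an algebraic retract of the simplicial chain complex $\bigl(C_*(\K),\partial_*\bigr)$, so that $\partial^{\V}$ becomes a copy of the simplicial boundary restricted to an invariant subcomplex; then $\partial^{\V}\circ\partial^{\V}=0$ is inherited from the identity $\partial\circ\partial=0$ on $C_*(\K)$. The tool is the discrete (gradient) flow of Forman~\cite{forman1998}. I would define $\Psi\colon C_q(\K)\to C_{q+1}(\K)$ on a basis $q$-simplex $\sigma$ by $\Psi(\sigma)=-\langle\tau,\sigma\rangle\,\tau$ whenever $(\sigma,\tau)\in\V$ and $\Psi(\sigma)=0$ otherwise (in particular $\Psi$ vanishes on every $\V$-critical simplex), and set
\[\Phi=\mathrm{id}+\partial\Psi+\Psi\partial\colon C_q(\K)\to C_q(\K).\]

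First I would verify that $\Phi$ is a chain map: using $\partial\circ\partial=0$, both $\partial\Phi$ and $\Phi\partial$ collapse to $\partial+\partial\Psi\partial$. The decisive structural input is the gradient hypothesis: because $\V$ admits no nontrivial closed $\V$-trajectory, the iterates of $\Phi$ \emph{stabilize}, i.e. there is an $N$ with $\Phi^{N}=\Phi^{N+1}=:\Phi^{\infty}$, and $\Phi^{\infty}$ is an idempotent chain map. Its image $C_q^{\infty}:=\Phi^{\infty}\bigl(C_q(\K)\bigr)$ of \emph{$\V$-invariant chains} is free with basis $\{\Phi^{\infty}(\alpha):\alpha\in\operatorname{Crit}_q^{\V}(\K)\}$, since $\Phi^{\infty}(\alpha)=\alpha+(\text{non-critical terms})$ for critical $\alpha$. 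As $\Phi^{\infty}$ is a chain map, $C_*^{\infty}$ is a subcomplex of $C_*(\K)$, so the restriction $\partial^{\infty}:=\partial\big|_{C_*^{\infty}}$ already satisfies $\partial^{\infty}\circ\partial^{\infty}=0$.

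It remains to identify $\partial^{\infty}$ with $\partial^{\V}$, and this is the step I expect to be the main obstacle. Under the isomorphism $C_q^{\V}(\K)\xrightarrow{\ \sim\ }C_q^{\infty}$ sending a critical generator $\alpha$ to its invariant chain $\Phi^{\infty}(\alpha)$, I would show that the coefficient of a critical $(q-1)$-simplex $\alpha$ in $\partial\bigl(\Phi^{\infty}(\beta)\bigr)$ equals $\sum_{P}w(P)$, summed over $\V$-trajectories $P$ from $\beta$ to $\alpha$. This is obtained by tracking how $\beta$ propagates under iteration of $\Phi$: each application of $\Psi$ across a paired edge realizes one elementary step $\beta_{i-1}\to\alpha_i\rightarrowtail\beta_i$ and contributes the factor $-\langle\beta_{i-1},\alpha_i\rangle\langle\beta_i,\alpha_i\rangle$, while the terminal application of $\partial$ landing on the critical face contributes $\langle\beta_r,\alpha\rangle$; these are exactly the factors in the paper's definition of $w(P)$. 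The delicate part is matching these sign conventions term by term, and this identification is also the sole place (through the stabilization of $\Phi$) where acyclicity of $\V$ is indispensable: without it $\Phi$ need not stabilize and no finite trajectory sum would exist. Granting the identification, $\partial^{\V}\circ\partial^{\V}$ is conjugate to $\partial^{\infty}\circ\partial^{\infty}=0$, which establishes $\mathrm{(i)}$.

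For $\mathrm{(ii)}$ I would invoke duality rather than rerun the flow. Reversing a $\V$-trajectory $\beta_0\to\alpha_1\rightarrowtail\cdots\rightarrowtail\beta_r\to\alpha_{r+1}$ produces a co-$\V$-trajectory $\alpha_{r+1}\leftarrow\beta_r\leftarrowtail\cdots\leftarrowtail\beta_0$, and this is a weight-preserving bijection between $\V$-trajectories from a critical $q$-simplex $\tau$ to a critical $(q-1)$-simplex $\sigma$ and co-$\V$-trajectories from $\sigma$ to $\tau$. Hence the coefficient of $\tau$ in $\delta_q^{\V}(\sigma)$ equals that of $\sigma$ in $\partial_q^{\V}(\tau)$; in the bases of critical simplices this reads $\delta_q^{\V}=(\partial_q^{\V})^{\mathsf T}$. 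Transposing the already-proved relation $\partial_{q-1}^{\V}\circ\partial_q^{\V}=0$ then yields $\delta_q^{\V}\circ\delta_{q-1}^{\V}=\bigl(\partial_{q-1}^{\V}\circ\partial_q^{\V}\bigr)^{\mathsf T}=0$, which is $\mathrm{(ii)}$.
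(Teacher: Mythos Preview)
The paper does not supply its own proof of this lemma: it is stated with citations to \cite{formancohom,gallais} and used as a black box, so there is no in-paper argument to compare against. Your outline is essentially Forman's original proof via the discrete flow $\Phi=\mathrm{id}+\partial\Psi+\Psi\partial$, and it is correct in structure; the stabilization of $\Phi^N$ from acyclicity, the identification of the coefficient of a critical $(q-1)$-face in $\partial\Phi^{\infty}(\beta)$ with the trajectory-weight sum, and the transpose argument for $\mathrm{(ii)}$ are all standard and sound. Your duality step for $\mathrm{(ii)}$ in fact coincides with what the paper itself invokes later (Section~3.1) when it notes that the matrix of $\delta_q^{\V}$ is the transpose of that of $\partial_q^{\V}$.
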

	Lemma \ref{2.4} lets us define the \emph{Morse Complex},
	\[\dots\rightarrow\{0\}\xrightarrow{\partial^{\V}_{d+1}}
	C_d^{\V}(\K)\xrightarrow{\partial^{\V}_d}C^{\V}_{d-1}(\K)\xrightarrow{\partial^{\V}_{d-1}}\dots\rightarrow
	C^{\V}_1(\K)\xrightarrow{\partial^{\V}_1}C^{\V}_0(\K)\xrightarrow{\partial^{\V}_0}\{0\}
	\rightarrow \cdots,\]
	and the \emph{co-Morse Complex},  
	\[\dots\rightarrow\{0\}\xrightarrow{\delta^{\V}_0}
	C^{\V}_0(\K)\xrightarrow{\delta^{\V}_1}C^{\V}_{1}(\K)\xrightarrow{\delta^{\V}_{2}}\dots\rightarrow
	C^{\V}_{d-1}(\K)\xrightarrow{\delta^{\V}_{d}}C^{\V}_d(\K)\xrightarrow{\delta^{\V}_{d+1}}\{0\}
	\rightarrow \cdots.\]
	The map $\partial_q^{\V}$ is called the $q$-th boundary map of the Morse
	complex and $\delta_q^{\V}$ is called the $q$-th coboundary map of the co-Morse
	complex. 
	By Lemma \ref{2.4}, $\operatorname{Im}(\partial^{\V}_{q+1})$ is a subgroup
	of $\operatorname{Ker}(\partial^{\V}_q)$ and
	$\operatorname{Im}(\delta^{\V}_{q})$ is a subgroup of
	$\operatorname{Ker}(\delta^{\V}_{q+1})$. The quotient groups
	$H^{\V}_q(\K)=\sfrac{\operatorname{Ker}(\partial^{\V}_q)}{\operatorname{Im}(\partial^{\V}_{q+1})}$
	and
	$H^{\V,q}(\K)=\sfrac{\operatorname{Ker}(\delta^{\V}_{q+1})}{\operatorname{Im}(\delta^{\V}_{q})}$
	are called the \emph{$q$-th Morse homology group} and \emph{$q$-th Morse
		cohomology group} of $\K$, respectively.
	
	\begin{theorem}\cite{forman1998, formancohom,gallais}
		Let $\K$ be a simplicial complex, and $\V$ be a gradient vector field
		defined on it. Then for any $q\in \mathbb{N}$, $\mathrm{(i)}$ $H_q(\K)$ and
		$H_q^{\V}(\K)$ are isomorphic, and $\mathrm{(ii)}$ $H^q(\K)$ and $H^{\V,q}(\K)$
		are isomorphic.
	\end{theorem}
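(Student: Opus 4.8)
The plan is to construct an explicit chain-homotopy equivalence between the simplicial chain complex $C_\bullet(\K)$ and the Morse complex $C_\bullet^\V(\K)$, via the ``discrete gradient flow'' of Forman (with the bookkeeping of Gallais for identifying the induced differential); part (ii) then follows by dualizing.

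First I would encode $\V$ as a degree-$(+1)$ operator $V$ on $C_\bullet(\K)$: on a basis simplex $\alpha$ that is the lower element of a pair $(\alpha,\beta)\in\V$ put $V(\alpha)=-\langle\beta,\alpha\rangle\,\beta$, and put $V(\alpha)=0$ otherwise, extending linearly (so that $V^2=0$). Let $\Phi:=\mathrm{id}+\partial V+V\partial$. Since $\partial^2=0$, a one-line computation yields $\partial\Phi=\Phi\partial$, so $\Phi$ is a chain endomorphism, and $\Phi-\mathrm{id}=\partial V+V\partial$ exhibits $V$ as a chain homotopy from $\mathrm{id}$ to $\Phi$; hence $\Phi$ is a quasi-isomorphism. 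Now the hypothesis on $\V$ enters: acyclicity (no nontrivial closed $\V$-trajectory) guarantees that the relation ``$\eta$ lies on some $\V$-trajectory starting at $\sigma$'' generates an honest partial order $\preceq$ on the finitely many simplices of $\K$, and $\Phi(\sigma)-\sigma$ lies in the span of simplices strictly $\succ\sigma$. Iterating, $\Phi^N$ stabilizes once $N$ exceeds the length of the longest $\preceq$-chain; write $\Phi^\infty$ for the stable map. As a stable value of chain maps all homotopic to $\mathrm{id}$, the map $\Phi^\infty$ is a chain map, is homotopic to $\mathrm{id}$, and is idempotent, so its image $C_\bullet^\infty:=\Phi^\infty(C_\bullet(\K))$ is a subcomplex onto which $C_\bullet(\K)$ deformation retracts; hence $H_q(C_\bullet^\infty)\cong H_q(\K)$ for all $q$.

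It then remains to identify $(C_\bullet^\infty,\partial)$ with $(C_\bullet^\V(\K),\partial^\V)$. One checks that for a $\V$-critical $\sigma$ one has $\Phi^\infty(\sigma)=\sigma+(\text{a chain supported on non-critical simplices})$, and that the chains $\{\Phi^\infty(\sigma):\sigma\in\mathrm{Crit}_q^\V(\K)\}$ are linearly independent and span $C_q^\infty$, so $\sigma\mapsto\Phi^\infty(\sigma)$ is an isomorphism $C_q^\V(\K)\xrightarrow{\ \sim\ }C_q^\infty$. Transporting $\partial$ through this isomorphism and unwinding $\Phi^\infty$ (the stabilized value of $\mathrm{id}+\partial V+V\partial$) in terms of $\partial$ and $V$, the coefficient of a critical $\alpha^{(q-1)}$ in the transported differential of $\sigma^{(q)}$ collapses exactly to $\sum_P w(P)$, the sum over all $\V$-trajectories $P$ from $\sigma$ to $\alpha$ of their weights — i.e.\ to the coefficient of $\alpha$ in $\partial_q^\V(\sigma)$. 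Hence $(C_\bullet^\V(\K),\partial^\V)\cong(C_\bullet^\infty,\partial)$, which proves (i). For (ii), apply $\mathrm{Hom}(-,\mathbb Z)$ to the entire picture: the transpose $\Phi^\vee$ is a cochain endomorphism homotopic to the identity, its iterates stabilize to an idempotent cochain map whose image is identified with $(C_\bullet^\V(\K),\delta^\V)$ in the same way — now with co-$\V$-trajectories in place of $\V$-trajectories — giving $H^q(\K)\cong H^{\V,q}(\K)$.

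The main obstacle is the two places where acyclicity and the trajectory combinatorics must be used in tandem: (a) the stabilization $\Phi^N=\Phi^\infty$, which rests on extracting a well-founded order on simplices from the absence of closed $\V$-trajectories; and (b) the final bookkeeping identifying the matrix of the transported differential with the weighted trajectory sums defining $\partial^\V$ (and dually $\delta^\V$), where each incidence-number sign must be reconciled with the weight conventions fixed in the preliminaries. The remaining ingredients — $\Phi$ being a chain map homotopic to $\mathrm{id}$, idempotent chain maps splitting off a deformation retract, and passing to the dual complex — are routine homological algebra.
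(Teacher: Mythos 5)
The paper states this theorem purely as a cited background result (Forman, Gallais) and supplies no proof of its own, so there is nothing internal to compare against; your outline is precisely the standard stabilized-gradient-flow argument from those references. The construction of $V$, the chain map $\Phi=\mathrm{id}+\partial V+V\partial$, its stabilization via the acyclicity of $\V$, the identification of the stable subcomplex with $(C_\bullet^{\V}(\K),\partial^{\V})$ through the weighted trajectory sums, and the dualization step for cohomology are all correct and are exactly how the cited sources establish the result.
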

	
	\section{Proof of the main theorem}
	
	Let $\K$ be a $d$-dimensional simplicial complex, and $\V$ be a gradient vector
	field defined on $\K$ such that, for some $k \in \{1, \dots, d\}$,
	$(\sigma_0^{(k)},\tau_0^{(k-1)})$ is a cancellable critical pair. Let $\W$ be
	the gradient vector field obtained by cancelling
	$(\sigma_0^{(k)},\tau_0^{(k-1)})$ from $\V$. Let $\partial_q^{\V}: C_q^{\V}
	\rightarrow C_{q-1}^{\V}$ and $\partial_q^{\W}: C_q^{\W} \rightarrow
	C_{q-1}^{\W}$ be the $q$-th boundary maps of the Morse complex of $\K$
	corresponding to the gradient vector fields $\V$ and $\W$, respectively. Note
	that, when $q \notin \{k,k-1\}$,
	$\operatorname{Crit_q^{\W}}(\K)=\operatorname{Crit_q^{\V}}(\K)$, and thus
	$C_q^{\W}(\K)=C_q^{\V}(\K)$.
	\begin{center}
		\begin{tikzpicture}
			\node at (0,0) {\small{$C_{k+2}^{\V}(\K)$}};
			\node at (2.5,0) {\small{$C_{k+1}^{\V}(\K)$}};
			\node at (5,0) {\small{$C_{k}^{\V}(\K)$}};
			\node at (7.5,0) {\small{$C_{k-1}^{\V}(\K)$}};
			\node at (10,0) {\small{$C_{k-2}^{\V}(\K)$}};
			\node at (-2.4,0) {$\cdot \hspace{0.1cm} \cdot \hspace{0.1cm} \cdot$};
			\node at (12.4,0) {$\cdot \hspace{0.1cm} \cdot \hspace{0.1cm} \cdot$};
			
			\draw[->]  (-1.8,0)--(-.8,0);
			\draw[->]  (0.8,0)--(1.7,0);
			\draw[->]  (3.3,0)--(4.3,0);
			\draw[->]  (5.7,0)--(6.7,0);
			\draw[->]  (8.3,0)--(9.2,0);
			\draw[->]  (10.8,0)--(11.9,0);
			
			\node at (1.25,0.3) {\scriptsize{$\partial_{k+2}^{\V}$}};
			\node at (3.8,0.3) {\scriptsize{$\partial_{k+1}^{\V}$}};
			\node at (6.2,0.3) {\scriptsize{$\partial_{k}^{\V}$}};
			\node at (8.75,0.3) {\scriptsize{$\partial_{k-1}^{\V}$}};
			\node at (11.35,0.3) {\scriptsize{$\partial_{k-2}^{\V}$}};
			
			\node at (5.2,.8) {\small{$\sigma_0$}};
			\node at (7.5,.8) {\small{$\tau_0$}};
			
			\node at (5.2,.5) {\rotatebox[origin=c]{270}{$\in$}};
			\node at (7.5,0.5) {\rotatebox[origin=c]{270}{$\in$}};
		\end{tikzpicture}
	\end{center}
	
	The image of any $\W$-critical $q$-simplex, under the map
	$\partial_q^{\W}$, depends only on the $\W$-trajectories from that simplex to the
	$\W$-critical $(q-1)$-simplices. Theorem \ref{T2.7} implies that, when $q \neq
	k$, the $\W$-trajectories and the $\V$-trajectories from a $\W$-critical
	$q$-simplex to a $\W$-critical $(q-1)$-simplex are identical. This implies, when
	$q\neq k$, for any $\W$-critical $q$-simplex $\sigma$ and for any $\W$-critical
	$(q-1)$-simplex $\tau$, the coefficient of $\tau$ in $\partial_q^{\W}(\sigma)$
	is the same as the coefficient of $\tau$ in $\partial_q^{\V}(\sigma)$.  So, the
	following can be deduced.
	\begin{enumerate}
		\item For $q>k+1$ or $q<k-1$,
		\[ \partial_q^{\W}= \partial_q^{\V}.\]
		
		\item  If  $\operatorname{Crit}^{\W}_k(\K)= \{ \sigma_1, \dots, \sigma_n \}$
		and for any $\beta \in \operatorname{Crit}_{k+1}^{\W}(\K)$,
		$\partial_{k+1}^{\V}(\beta)= \sum_{j=0}^{n} b_j\sigma_j$, then, 
		\[\partial_{k+1}^{\W}(\beta)= \sum_{j=1}^{n} b_j\sigma_j.\]
		
		\item The boundary operator $\partial_{k-1}^{\W}$ is the restriction of
		$\partial_{k-1}^{\V}$ to the subgroup $C_{k-1}^{\W}(\K)$, i.e.,
		\[\partial_{k-1}^{\W}=\partial_{k-1}^{\V}\big|_{C_{k-1}^{\W}(\K)}\]
		(note that, $\operatorname{Crit}_{k-1}^{\W}(\K)=
		\operatorname{Crit}_{k-1}^{\V}(\K) \setminus \{\tau_0\}$).
	\end{enumerate}

	The following proposition covers the case $q=k$, i.e., it establishes a
	relation between the two $k$-th boundary maps $\partial_{k}^{\W}$ and
	$\partial_{k}^{\V}$.
	
	\begin{proposition} \label{P3.1.}
		Let $\K$ be a $d$-dimensional simplicial complex with an assigned gradient
		vector field $\V$. Let $(\sigma_0^{(k)},\tau_0^{(k-1)})$ be a cancellable
		critical pair,  for some $k \in \{1, \dots, d\}$, and $\W$ be the gradient
		vector field obtained by cancelling $(\sigma_0^{(k)},\tau_0^{(k-1)})$ from $\V$.
		Let $\partial_q^{\V}: C_q^{\V}(\K) \rightarrow C_{q-1}^{\V}(\K)$ and
		$\partial_q^{\W}: C_q^{\W}(\K) \rightarrow C_{q-1}^{\W}(\K)$ be the $q$-th
		boundary maps of the Morse complexes of $\K$ corresponding to $\V$ and $\W$,
		respectively. 
		Let $\operatorname{Crit}^{\V}_{k}(\K)= \{ \sigma_0, \sigma_1, \dots, \sigma_n
		\}$, 
		$\operatorname{Crit}^{\V}_{k-1}(\K)= \{ \tau_0, \tau_1, \dots, \tau_m \}$, and
		for all $j\in\{0, \dots , n\}$, $\partial_k^{\V}(\sigma_j)= \sum_{i=0}^m
		a_{ij}\tau_i$. Then, for all $j\in\{1, \dots , n\}$,
		\[\partial_k^{\W}(\sigma_j)= \sum_{i=1}^m (a_{ij}-a_{00}a_{0j}a_{i0}) \cdot
		\tau_i.\]
		
	\end{proposition}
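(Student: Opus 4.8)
The plan is to express both sides as weighted sums over gradient trajectories and to exhibit a weight-preserving correspondence between them. By definition $\partial_k^{\V}(\sigma_j)=\sum_i a_{ij}\tau_i$ with $a_{ij}=\sum_{P'}w(P')$ over all $\V$-trajectories $P'$ from $\sigma_j$ to $\tau_i$, and $\partial_k^{\W}(\sigma_j)=\sum_{i\geq 1}c_{ij}\tau_i$ with $c_{ij}=\sum_{Q}w(Q)$ over all $\W$-trajectories $Q$ from $\sigma_j$ to $\tau_i$; we must show $c_{ij}=a_{ij}-a_{00}a_{0j}a_{i0}$ for $i,j\geq 1$. By Theorem~\ref{T2.7}, $\W$ agrees with $\V$ except along the unique $\V$-trajectory
\[P:\ \sigma_0=\beta_0\rightarrow\alpha_1\rightarrowtail\beta_1\rightarrow\cdots\rightarrow\alpha_r\rightarrowtail\beta_r\rightarrow\alpha_{r+1}=\tau_0,\]
where the pairs $(\alpha_\ell,\beta_\ell)$, $1\leq\ell\leq r$, are removed and the reversed pairs $(\alpha_{\ell+1},\beta_\ell)$, $0\leq\ell\leq r$, are inserted. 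Since $P$ is unique, $a_{00}=w(P)\in\{1,-1\}$, so $a_{00}^{-1}=a_{00}$; consequently the asserted formula is exactly the effect of the row operations $R_i\mapsto R_i-a_{i0}a_{00}R_0$ ($i\geq 1$) followed by deletion of the $0$-th row and column of $(a_{ij})$ --- after these operations the $0$-th column is $(a_{00},0,\dots,0)$, so discarding that row and column is the natural thing to do --- which is the content of Theorem~\ref{T1.1}(4).

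One half is immediate: a $\W$-trajectory from $\sigma_j$ to $\tau_i$ that uses none of the reversed pairs uses only $\V$-pairs and avoids each $(\alpha_\ell,\beta_\ell)$ with $1\leq\ell\leq r$; conversely, every $\V$-trajectory from $\sigma_j$ to $\tau_i$ that avoids those pairs is a $\W$-trajectory, with the same weight since incidence numbers are intrinsic to $\K$. These account precisely for the summands of $a_{ij}$ coming from $\V$-trajectories avoiding $P$'s pairs, and the task reduces to comparing the $\W$-trajectories from $\sigma_j$ to $\tau_i$ that \emph{do} use a reversed pair with the $\V$-trajectories from $\sigma_j$ to $\tau_i$ that use a pair on $P$, and showing that the difference of these two weighted families equals $-a_{00}a_{0j}a_{i0}$.

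The mechanism is already visible when $r=0$, where $\W=\V\sqcup\{(\tau_0,\sigma_0)\}$ and $a_{00}=w(P)=\langle\sigma_0,\tau_0\rangle$. Using uniqueness of $P$ one verifies that a $\W$-trajectory from $\sigma_j$ to $\tau_i$ ($i,j\geq 1$) either is a $\V$-trajectory from $\sigma_j$ to $\tau_i$ or factors uniquely as $Q_1\ast(\tau_0\rightarrowtail\sigma_0)\ast Q_2$ with $Q_1$ a $\V$-trajectory from $\sigma_j$ to $\tau_0$ and $Q_2$ a $\V$-trajectory from $\sigma_0$ to $\tau_i$ (the two pieces are automatically vertex-disjoint, since otherwise gluing them across a shared simplex would yield a second $\V$-trajectory from $\sigma_0$ to $\tau_0$). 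In the weight formula, the step $\tau_0\rightarrowtail\sigma_0$ introduces the factor $-\langle\sigma_0,\tau_0\rangle$ once the terminal incidence of $Q_1$ is absorbed, and the remaining factors reassemble into $w(Q_1)w(Q_2)$; hence the concatenation has weight $-\langle\sigma_0,\tau_0\rangle\,w(Q_1)\,w(Q_2)=-a_{00}\,w(Q_1)\,w(Q_2)$. Summing over all $Q_1,Q_2$ gives $c_{ij}=a_{ij}-a_{00}a_{0j}a_{i0}$, and the sign comes entirely from the ``$-\langle\cdot,\cdot\rangle\langle\cdot,\cdot\rangle$'' convention in the weight.

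For general $r$ the template is the same, with the single reversed pair replaced by a backward run $\tau_0\rightarrowtail\beta_r\rightarrow\alpha_r\rightarrowtail\cdots\rightarrow\alpha_1\rightarrowtail\beta_0=\sigma_0$ along the reversed path, and the combinatorics is the crux. Its backbone: by acyclicity of $\V$ and uniqueness of $P$, the only $(k-1)$-faces of $\beta_\ell$ lying on $P$ are $\alpha_\ell$ and $\alpha_{\ell+1}$ (any other would close to a nontrivial closed $\V$-trajectory or produce a second $\V$-trajectory from $\sigma_0$ to $\tau_0$); therefore a $\V$-trajectory using a pair on $P$ follows $P$ forward until it leaves through an off-path face, a $\W$-trajectory using a reversed pair runs backward along the reversed path until it leaves through an off-path face, and --- again by acyclicity of $\V$ --- the latter cannot return to $P$. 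Matching the off-path excursions of a $\V$-trajectory meeting $P$ with those of its reversed counterpart, and re-summing over the positions at which trajectories enter and leave $P$ (this re-summation completes the truncated ``head'' and ``tail'' segments into full $\V$-trajectories, producing the factors $a_{0j}$ and $a_{i0}$), gives the required weight-preserving bijection; its only defect is the family of ``wrap-around'' $\W$-trajectories that pass from $\sigma_j$ into $\tau_0$, back along the entire reversed path to $\sigma_0$, and out to $\tau_i$, whose backward run --- exactly as in the $r=0$ computation --- carries the global factor $-w(P)=-a_{00}$, so that this family contributes precisely $-a_{00}a_{0j}a_{i0}$. I expect the main obstacle to be this last step: establishing the non-return property for $\W$-trajectories, verifying that the off-path excursions correspond bijectively and with matching weights, and performing the re-summation over entry/exit positions so that the partial sums collapse to $a_{0j}$ and $a_{i0}$ --- all of which rest on the acyclicity of both $\V$ and $\W$ and, crucially, on $P$ being the \emph{unique} $\V$-trajectory from $\sigma_0$ to $\tau_0$.
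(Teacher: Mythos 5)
Your reduction is the right one---peel off the $\W$-trajectories disjoint from $P_0$, which coincide with the corresponding $\V$-trajectories, and compare the two families that meet $P_0$---and your $r=0$ analysis is correct. The gap is in the mechanism you propose for general $r$: there is no weight-preserving bijection between the $\V$-trajectories meeting $P_0$ and the $\W$-trajectories meeting $P_0$ minus the ``wrap-around'' family, and the wrap-around family does not contribute $-a_{00}a_{0j}a_{i0}$. Concretely, a $\W$-trajectory from $\sigma_j$ to $\tau_i$ that passes through $\tau_0$ must reach $\tau_0$ \emph{without touching $P_0$ earlier} (the moment it meets $P_0$ at some $\alpha_t$ it is forced backward along $\overline{P}_0$ and can never arrive at $\alpha_{r+1}=\tau_0$ unless $t=r+1$), and after the backward run reaches $\sigma_0$ it must leave $P_0$ at once and never return. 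So the wrap-arounds are indexed by (heads hitting $P_0$ first at $\alpha_{r+1}$) $\times$ (tails leaving $P_0$ at $\beta_0$), whereas $a_{0j}$ and $a_{i0}$ sum over \emph{all} $\V$-trajectories $\sigma_j\to\tau_0$ and $\sigma_0\to\tau_i$, including those that enter $P_0$ at an interior $\alpha_t$ and ride it forward to $\tau_0$, resp.\ ride $P_0$ to an interior $\beta_s$ before branching off. For the same reason the residual matching fails: a $\V$-trajectory meeting $P_0$ enters at $\alpha_t$ and exits at $\beta_s$ with $t\le s$, a $\W$-trajectory with $s<t$, and the weighted counts of heads at $\alpha_t$ and of tails at $\beta_s$ genuinely depend on $t$ and $s$, so the two families cannot be matched excursion by excursion with one exceptional slice left over.

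What closes the gap (Lemma~\ref{l3.4}) is a \emph{signed double count} of pairs rather than a bijection-with-defect. Let $\T_1$ be the set of $\W$-trajectories from $\sigma_j$ to $\sigma_0$ and $\T_2$ the set of $\V$-trajectories from $\sigma_0$ to $\tau_i$. Splicing $P_1\in\T_1$ (entering $P_0$ at $\alpha_t$) with $P_2\in\T_2$ (leaving $P_0$ at $\beta_s$) along the stretch of $P_0$ or of $\overline{P}_0$ between $\alpha_t$ and $\beta_s$ gives a bijection from $\T_1\times\T_2$ onto the disjoint union of the two families in question: the splice lands among the $\W$-trajectories with weight $+w(P_1)w(P_2)$ when $s<t$, and among the $\V$-trajectories with weight $-w(P_1)w(P_2)$ when $s\ge t$. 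Hence the \emph{difference} of the two weighted families equals $\bigl(\sum_{P_1\in\T_1}w(P_1)\bigr)\bigl(\sum_{P_2\in\T_2}w(P_2)\bigr)=-w(P_0)\,a_{0j}\,a_{i0}$, using the identity $\sum_{\T_1}w=-w(P_0)a_{0j}$ that you correctly anticipate (Lemma~\ref{l3.3}). Your wrap-around family is only the $(t,s)=(r+1,0)$ slice of this product; the correction term is the full product over all entry and exit positions, and it is not the weighted count of any actual subfamily of $\W$-trajectories.
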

	
	Since, $(\sigma_0, \tau_0)$ is a cancellable critical pair, there is a
	unique $\V$-trajectory $P_0$ from $\sigma_0$ to $\tau_0$ as follows.
	
	\[P_0: (\sigma_0=) \beta^{(k)}_0, \alpha^{(k-1)}_1, \beta^{(k)}_1, \cdots,
	\alpha^{(k-1)}_r, \beta^{(k)}_r, \alpha^{(k-1)}_{r+1}(=\tau_0).\]
	
	Let $\overline{P}_0$ be the sequence obtained by reversing the order of
	simplices in $P_0$, i.e.,
	
	\[\overline{P}_0: (\tau_0=) \alpha^{(k-1)}_{r+1} , \beta^{(k)}_r ,
	\alpha^{(k-1)}_r , \cdots, \beta^{(k)}_1 , \alpha^{(k-1)}_1 ,
	\beta^{(k)}_{0}(=\sigma_0).\]
	
	In the following diagram we represent $P_0$ with black arrows and
	$\overline{P}_0$ by red arrows (right to left).

	\begin{tikzpicture}
		\node at (0,0) {$P_0:(\sigma_0=)\beta_0^{(k)}$};
		\node at (3,0) {$\alpha_{1}^{(k-1)}$};
		\node at (5.2,0) {$\beta_{1}^{(k)}$};
		\node at (7.1,0) {$\cdots$};
		\node at (9.1,0) {$\alpha_{r}^{(k-1)}$};
		\node at (11,0) {$\beta_{r}^{(k)}$};
		\node at (13.5,0) {$\alpha_{r+1}^{(k-1)}(=\tau_0)$};

		\node at (7.2,-.7) {\textcolor{red}{$\overline{P}_0$}};
		\node at (7.2,.7) {$P_0$};
		
		\draw[->] (1.5,0.1)--(2.3,0.1);
		\draw[>->] (3.7,0.1)--(4.5,0.1);
		\draw[->] (5.8,0.1)--(6.6,0.1);
		\draw[->] (7.5,0.1)--(8.3,0.1);
		\draw[>->] (9.8,0.1)--(10.6,0.1);
		\draw[->] (11.5,0.1)--(12.3,0.1);
		
		\draw[<-<,red] (1.5,-0.1)--(2.3,-0.1);
		\draw[<-,red] (3.7,-0.1)--(4.5,-0.1);
		\draw[<-<,red] (5.8,-0.1)--(6.6,-0.1);
		\draw[<-<,red] (7.5,-0.1)--(8.3,-0.1);
		\draw[<-,red] (9.8,-0.1)--(10.6,-0.1);
		\draw[<-<,red] (11.5,-0.1)--(12.3,-0.1);

	\end{tikzpicture}
	
	Before going to the proof of Proposition
	\ref{P3.1.}, we discuss the  relationship between the $\W$-trajectories and the
	$\V$-trajectories in $\K$.

	\begin{observation} \label{ob1} For $\sigma_j\in \operatorname{Crit}^{\W}_k(\K)$ and $\tau_i\in \operatorname{Crit}^{\W}_{k-1}(\K)$, we have the following.
		\begin{enumerate}[(1)]
			\item  Any $\V$-trajectory which does not involve any simplices from $P_0$, is also a $\W$-trajectory, and vice versa.
			
				\item Any $\V$-trajectory from $\sigma_0$ to $\tau_i$, after starting from $\sigma_0$, follows along $P_0$ to some $k$-simplex $\beta_s$, where $s\in \{0,\ldots,r\}$ (see Figure~\ref{f1}), and then leaves $P_0$ to reach $\tau_i$. It is not possible for such a trajectory to return to $P_0$ after $\beta_s$, otherwise it would violate the uniqueness of $P_0$. 
			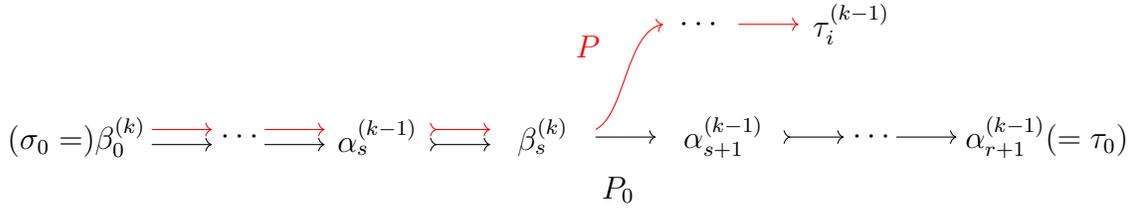
\begin{figure}[H]
				\hfill
				\begin{center}
					\begin{tikzpicture}
						\node at (0.5,0) {$(\sigma_0=)\beta_0^{(k)}$};
						\node at (2.7,0) {$\cdots$};
						\node at (4.5,0) {$\alpha_{s}^{(k-1)}$};
						\node at (6.7,0) {$\beta_{s}^{(k)}$};
						\node at (9.1,0) {$\alpha_{s+1}^{(k-1)}$};
						\node at (11.1,0) {$\cdots$};
						\node at (13.4,0) {$\alpha_{r+1}^{(k-1)}(=\tau_0)$};
						
						\node at (8.8,1.5) {$\cdots$}; 
						\node at (10.8,1.5) {$\tau_i^{(k-1)}$};
						
						\node at (7.3,1.2) {\textcolor{red}{$P$}};
						\node at (7.7,-.7) {$P_0$};
						
						\draw[->] (1.5,-0.1)--(2.3,-0.1);
						\draw[->] (3,-0.1)--(3.8,-0.1);
						\draw[>->] (5.2,-0.1)--(6,-0.1);
						\draw[->] (7.4,0)--(8.2,0);
						\draw[>->] (9.9,0)--(10.7,0);
						\draw[->] (11.4,0)--(12.2,0);
						
						\draw[->,red] (7.4,0.1)..controls (7.8,0.3) and (7.8,1.4) .. (8.3,1.5);
						\draw[->,red] (9.3,1.5)--(10.1,1.5);
						
						\draw[->,red] (1.5,0.1)--(2.3,0.1);
						\draw[->,red] (3,0.1)--(3.8,0.1);
						\draw[>->,red] (5.2,0.1)--(6,0.1);
						
					\end{tikzpicture}
				\end{center}
				\caption{ $P$ is a $\V$-trajectory (red) that starts from $\sigma_0$, 
					follows along $P_0$ (black) to $\beta_s$ and then leaves $P_0$ to reach
					$\tau_i$.  }
				\label{f1}
			\end{figure}
		
			\item Any $\W$-trajectory from $\sigma_j$ to $\sigma_0$ first meets $P_0$ at some $(k-1)$-simplex $\alpha_t$, where $t\in \{1,\ldots,r+1\}$ (see Figure \ref{f2}), and follows
			along $\overline{P}_0$ to $\sigma_0$. It is not possible for such a trajectory
			to leave $\overline{P}_0$ after $\alpha_t$, otherwise it would give rise to a
			nontrivial closed $\V$-trajectory, violating the \emph{acyclicity} of the
			gradient vector field $\V$.	
			\begin{figure}[H] 
				\begin{center}
					\begin{tikzpicture}
						\node at (0.5,0) {$(\sigma_0=)\beta_0^{(k)}$};
						\node at (2.7,0) {$\cdots$};
						\node at (4.7,0) {$\beta_{t-1}^{(k)}$};
						\node at (7.1,0) {$\alpha_{t}^{(k-1)}$};
						\node at (9.4,0) {$\beta_{t}^{(k)}$};
						\node at (11.2,0) {$\cdots$};
						\node at (13.5,0) {$\alpha_{r+1}^{(k-1)}(=\tau_0)$};
						\node at (4.3,-1.5) {$\sigma_j^{(k)}$};
						\node at (5.9,-1.5) {$\cdots$};
						
						\node at (7.2,-1.2) {\textcolor{red}{$P$}};
						\node at (8,.7) {$P_0$};
						
						\draw[->] (1.5,0.1)--(2.3,0.1);
						\draw[>->] (3,0.1)--(3.8,0.1);
						\draw[->] (5.5,0.1)--(6.3,0.1);
						\draw[>->] (7.8,0)--(8.6,0);
						\draw[->] (10,0)--(10.8,0);
						\draw[->] (11.5,0)--(12.3,0);
						\draw[->,red] (4.7,-1.5)--(5.5,-1.5);
						\draw[->,red] (6.3,-1.5) .. controls (6.86,-1.47) and (7.12,-1) ..
						(7,-0.5);
						\draw[<-<,red] (1.5,-0.1)--(2.3,-0.1);
						\draw[<-,red] (3,-0.1)--(3.8,-0.1);
						\draw[<-<,red] (5.5,-0.1)--(6.3,-0.1);
					\end{tikzpicture}
				\end{center}
				\caption{ $P$ is a $\W$-trajectory (red) from $\sigma_j$ to $\sigma_0$,
					which meets $P_0$ (black) at $\alpha_t$, then follows along $\overline{P}_0$ to
					$\sigma_0$.  }
				\label{f2}
			\end{figure}
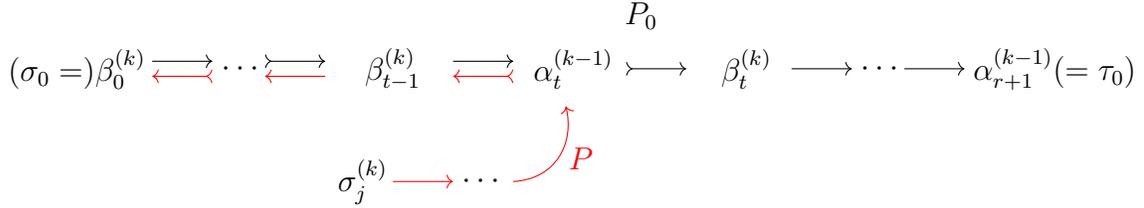

			\item Any $\V$-trajectory from  $\sigma_j$ to $\tau_i$, which involves some
			simplices from $P_0$, has the following property. After starting from $\sigma_j$,
			it meets $P_0$ at some $(k-1)$-simplex $\alpha_t$, where $t\in \{1,\ldots,r\}$, then follows along $P_0$
			to some $k$-simplex $\beta_{s}$, with $ t \leq s \leq r$, and then leaves $P_0$ and
			ends at $\tau_i$ (see Figure \ref{f3}).  Thus, $P$
			contains only the simplices $\beta_{t}$,$\ldots$, $\beta_{s}$ and
			$\alpha_{t}$,$\ldots$, $\alpha_{s}$ from $P_0$.
			
			\begin{figure}[H]
				\hfill
				\begin{center}
					\begin{tikzpicture}
						\node at (0.5,0) {$(\sigma_0=)\beta_0^{(k)}$};
						\node at (2.7,0) {$\cdots$};
						\node at (4.55,0) {$\alpha_t^{(k-1)}$};
						
						\node at (6.5,0) {$\cdots$};
						\node at (8.2,0) {$\beta_s^{(k)}$};
						\node at (10,0) {$\cdots$};
						\node at (12.4,0) {$\alpha_{r+1}^{(k-1)}(=\tau_0)$};

						\node at (10.1,1.5) {$\cdots$}; 
						\node at (12,1.5) {$\tau_i^{(k-1)}$};

						\node at (1.8,-1.5) {$\sigma_j^{(k)}$};
						\node at (3.4,-1.5) {$\cdots$};
						\node at (3,.5) {$P_0$};
						\node at (6.5,.7) {\textcolor{red}{$P$}};
						
						\draw[->] (1.5,0)--(2.3,0);
						\draw[->] (3,0)--(3.8,0);
						\draw[>->] (5.2,-0.05)--(6,-0.05);
						\draw[>->] (6.9,-0.05)--(7.7,-0.05);
						\draw[->] (8.7,0)--(9.5,0);
						\draw[->] (10.4,0)--(11.2,0);
						
						\draw[->,red] (2.2,-1.5)--(3,-1.5);
						\draw[->,red] (3.8,-1.5) .. controls (4.36,-1.47).. (4.4,-0.5);

						\draw[>->,red] (5.2,0.15)--(6,0.15);
						\draw[>->, red] (6.9,0.15)--(7.7,0.15);

						\draw[->,red] (8.7,0.1)..controls (9,0.3) and (9,1.4) .. (9.6,1.5);
						\draw[->,red] (10.5,1.5)--(11.3,1.5);

					\end{tikzpicture}
				\end{center}
				\caption{ $P$ (red) is a $\V$-trajectory from  $\sigma_j$ to $\tau_i$ that
					meets $P_0$ (black) at $\alpha_t$, follows along $P_0$ to $\beta_{s}$ and then
					leaves the trajectory $P_0$ to reach $\tau_i$.  }
				\label{f3}
			\end{figure}
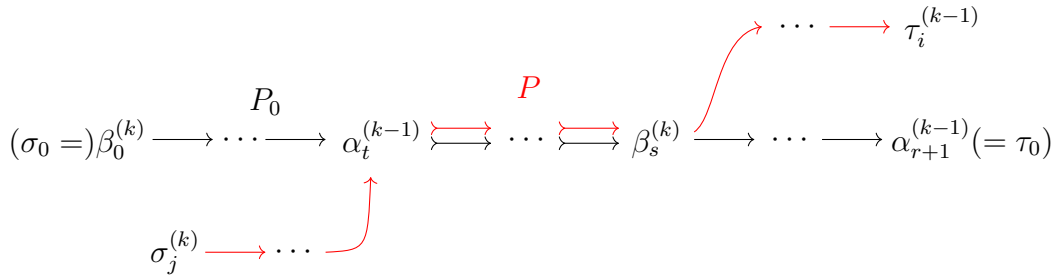

			\item  Any $\W$-trajectory from  $\sigma_j$ to $\tau_i$, which involves
			some simplices from $P_0$, has the following property. After starting from
			$\sigma_j$, it meets $P_0$ at some $(k-1)$-simplex $\alpha_t$, where  $t \in \{1,\ldots,r+1\}$ then follows
			along $\overline{P}_0$ to some $k$-simplex $\beta_{s}$, with $0\leq s < t$, and
			then leaves $P_0$  and ends at $\tau_i$ (see Figure \ref{f4}). Thus, $P$
			contains only the simplices $\beta_s$, $\beta_{s+1}$,$\ldots$,
			$\beta_{t-1}$ and
			$\alpha_{s+1}$, $\alpha_{s+2}$,$\ldots$, $\alpha_{t}$ from $P_0$.

			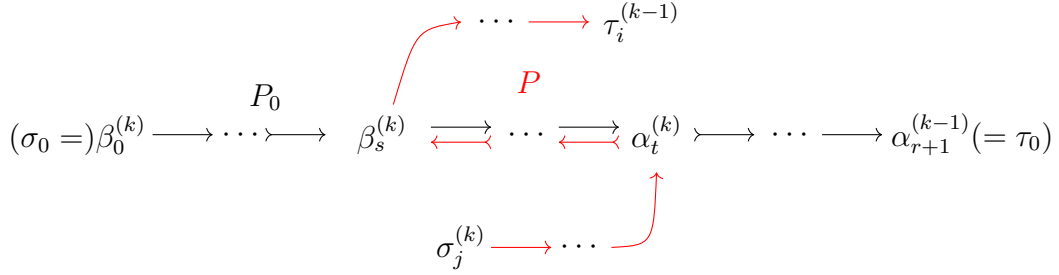
\begin{figure}[H]
				\hfill
				\begin{center}
					\begin{tikzpicture}
						\node at (0.5,0) {$(\sigma_0=)\beta_0^{(k)}$};
						\node at (2.7,0) {$\cdots$};
						\node at (4.55,0) {$\beta_s^{(k)}$};
						
						\node at (6.5,0) {$\cdots$};
						\node at (8.2,0) {$\alpha_t^{(k)}$};
						\node at (10,0) {$\cdots$};
						\node at (12.4,0) {$\alpha_{r+1}^{(k-1)}(=\tau_0)$};

						\node at (6.1,1.5) {$\cdots$}; 
						\node at (8,1.5) {$\tau_i^{(k-1)}$};

						\node at (5.6,-1.5) {$\sigma_j^{(k)}$};
						\node at (7.2,-1.5) {$\cdots$};
						
						\node at (3,.5) {$P_0$};
						\node at (6.5,.7) {\textcolor{red}{$P$}};
						
						\draw[->] (1.5,0)--(2.3,0);
						\draw[>->] (3,0)--(3.8,0);
						\draw[->] (5.2,0.1)--(6,0.1);
						\draw[->] (6.9,0.1)--(7.7,0.1);
						\draw[>->] (8.7,0)--(9.5,0);
						\draw[->] (10.4,0)--(11.2,0);
						
						\draw[->,red] (6,-1.5)--(6.8,-1.5);
						\draw[->,red] (7.6,-1.5) .. controls (8.16,-1.47).. (8.2,-0.5);

						\draw[<-<,red] (5.2,-0.1)--(6,-0.1);
						\draw[<-<, red] (6.9,-0.1)--(7.7,-0.1);

						\draw[->,red] (4.7,0.4)..controls  (5,1.4) .. (5.6,1.5);
						\draw[->,red] (6.5,1.5)--(7.3,1.5);
						
					\end{tikzpicture}
				\end{center}
				\caption{ $P$ (red) is a $\W$-trajectory from  $\sigma_j$ to $\tau_i$ that
					meets $P_0$ (black) at $\alpha_t$, follows along $\overline{P}_0$ to $\beta_{s}$
					and then leaves the trajectory $P_0$ to reach $\tau_i$.  }
				\label{f4}
			\end{figure}
			
		\end{enumerate}
	\end{observation}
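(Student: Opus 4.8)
The plan is to read off, from the explicit description of $\W$ as the reversal of $\V$ along $P_0$, a short ``dictionary'' of how trajectories interact with $P_0$, and then to deduce all five assertions from it. Recall that $\W$ is obtained from $\V$ by deleting the pairs $(\alpha_i,\beta_i)$ for $1\le i\le r$ and inserting the pairs $(\alpha_{i+1},\beta_i)$ for $0\le i\le r$. Three elementary consequences will be used throughout. First, \emph{off-path agreement}: a pair of simplices not both lying on $P_0$ belongs to $\V$ iff it belongs to $\W$, and since the descending (set-inclusion) steps of a trajectory do not depend on the vector field, any trajectory none of whose simplices lie on $P_0$ is simultaneously a $\V$- and a $\W$-trajectory; this is precisely assertion~(1). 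Second, \emph{forced ascent}: in $\V$ the faces $\alpha_1,\dots,\alpha_r$ are paired upward with $\beta_1,\dots,\beta_r$ (while $\beta_0=\sigma_0$ and $\alpha_{r+1}=\tau_0$ are critical), whereas in $\W$ the faces $\alpha_1,\dots,\alpha_{r+1}$ are paired upward with $\beta_0,\dots,\beta_r$; hence when a $\V$-trajectory (resp.\ $\W$-trajectory) descends onto some $\alpha_t\in P_0$, its next ascending step is forced to be $\alpha_t\rightarrowtail\beta_t$ (resp.\ $\alpha_t\rightarrowtail\beta_{t-1}$). Third, the \emph{entry rule}: a $k$-simplex $\beta_\ell$ of $P_0$ can be reached by an ascending step only through its paired $(k-1)$-face, namely $\alpha_\ell$ in $\V$ and $\alpha_{\ell+1}$ in $\W$, so a trajectory starting off $P_0$ first meets $P_0$ at one of the $(k-1)$-simplices $\alpha_t$.

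For assertions~(2)--(5) I would first observe that, the moment a trajectory sits on $P_0$ (because it starts there, as for $\sigma_0$ in~(2), or because it descends onto some $\alpha_t$, as in~(3)--(5)), forced ascent together with the entry rule compels it to run monotonically along $P_0$: forward, $\alpha_t\rightarrowtail\beta_t\to\alpha_{t+1}\rightarrowtail\cdots$, for a $\V$-trajectory, and backward along $\overline{P}_0$, $\alpha_t\rightarrowtail\beta_{t-1}\to\alpha_{t-1}\rightarrowtail\cdots$, for a $\W$-trajectory, until it descends off $P_0$ at some $k$-simplex $\beta_s$. This already produces the contiguous blocks of simplices, and the index ranges, recorded in~(2)--(5): in the $\V$-case one has $t\le s\le r$ because the $\V$-critical simplex $\tau_0=\alpha_{r+1}$ would otherwise terminate the trajectory short of $\tau_i$, while in the $\W$-case one may have $t\le r+1$ since $\alpha_{r+1}$ is paired in $\W$, and $0\le s<t$. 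What remains, and what is genuinely substantive, is the claim common to~(2)--(5) that a trajectory visits $P_0$ in only one such block, i.e.\ that it cannot leave $P_0$ and later return.

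The hard part will be this ``no-return'' statement, which I would settle by a splicing argument. Suppose a trajectory exits $P_0$ at $\beta_s$ and subsequently re-enters at some $\alpha_{t'}$; by off-path agreement the intervening detour $D\colon\beta_s\to\cdots\to\alpha_{t'}$ is both a $\V$- and a $\W$-trajectory segment, and forced ascent gives $\alpha_{t'}\rightarrowtail\beta_{t'}$ in $\V$. I would then split into two cases according to the position of $t'$ relative to $s$ on $P_0$. If $t'\le s$, splicing the forward segment $\beta_{t'}\to\cdots\to\beta_s$ of $P_0$ with $D$ and the ascent $\alpha_{t'}\rightarrowtail\beta_{t'}$ yields a nontrivial closed $\V$-trajectory, contradicting the acyclicity of $\V$. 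If $t'>s$, splicing the initial segment $\sigma_0=\beta_0\to\cdots\to\beta_s$ of $P_0$, then $D$, then the terminal segment $\alpha_{t'}\rightarrowtail\beta_{t'}\to\cdots\to\alpha_{r+1}=\tau_0$ of $P_0$ yields a $\V$-trajectory from $\sigma_0$ to $\tau_0$ distinct from $P_0$ (it contains $D$), contradicting the uniqueness of $P_0$. The only compatibility to verify at each splice is that two consecutive descending steps leave a common $k$-simplex through distinct faces, which holds because the trajectory itself left $\beta_s$ through a face other than $\alpha_s$. Specializing this dichotomy to trajectories running between the off-$P_0$ critical simplices $\sigma_j$ and $\tau_i$ gives~(4) and~(5), and to a $\W$-trajectory ending at $\sigma_0$ gives~(3); in the last case the entry rule forces such a trajectory to enter $\beta_0=\sigma_0$ through $\alpha_1$, so once it reaches $P_0$ it must descend along $\overline{P}_0$ all the way to $\sigma_0$.
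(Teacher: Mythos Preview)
Your argument is correct and follows the same two ideas the paper signals inline---acyclicity of $\V$ and uniqueness of $P_0$---so the approach is essentially the paper's. The paper, however, treats this as an Observation without a formal proof: it only says ``otherwise it would violate the uniqueness of $P_0$'' for~(2) and ``otherwise it would give rise to a nontrivial closed $\V$-trajectory'' for~(3), and gives no justification at all for~(4) and~(5). Your unified no-return argument, with the explicit dichotomy $t'\le s$ (closed $\V$-trajectory) versus $t'>s$ (second $\V$-path $\sigma_0\to\tau_0$), is more complete than the paper's hints; in particular it makes clear that \emph{both} contradictions are in play for each of~(2)--(5), whereas the paper's one-line justifications could be read as if a single one sufficed in each case.
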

	\begin{lemma} \label{l3.3}
		For any $\sigma_j \in \operatorname{Crit}^{\W}_{k}(\K)$, there is a one-to-one correspondence
		between the $\W$-trajectories from $\sigma_j$ to $\sigma_0$ and the $\V$-trajectories from $\sigma_j$ to $\tau_0$. Moreover,
		\[ \sum_{\substack{P: P \text{ is a }\\ \W\text{-traj. from } \\  \sigma_j
			\text{ to } \sigma_0 }}w(P)=-w(P_0)\cdot \sum_{\substack{P: P \text{ is a }\\
			\V\text{-traj. from } \\  \sigma_j \text{ to } \tau_0 }}w(P).\]	
		
	\end{lemma}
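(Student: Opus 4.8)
The plan is to prove the identity by setting up a bijection between the $\W$-trajectories from $\sigma_j$ to $\sigma_0$ and the $\V$-trajectories from $\sigma_j$ to $\tau_0$ under which the weight is multiplied by the scalar $-w(P_0)$, and then summing. One half of the bijection is read off from Observation~\ref{ob1}(3): every $\W$-trajectory $P$ from $\sigma_j$ to $\sigma_0$ splits as an initial segment $P'$ from $\sigma_j$ to some $\alpha_t$ (the first simplex of $P$ on $P_0$, with $t\in\{1,\dots,r+1\}$) followed by the backward run $\alpha_t\overline{P}_0\sigma_0$ along $P_0$. Since $P'$ meets $P_0$ only at its endpoint $\alpha_t$, it uses only pairs lying in $\V\cap\W$ and so is also a $\V$-trajectory from $\sigma_j$ to $\alpha_t$; I would send $P$ to the $\V$-trajectory $\Phi(P)$ obtained by following $P'$ to $\alpha_t$ and then running forward along $P_0$ (via the $\V$-pairs $(\alpha_i,\beta_i)$) to $\tau_0$, which is again simple since $P'$ avoids $P_0$ away from $\alpha_t$.

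To see $\Phi$ is a bijection I would describe its inverse. Let $Q$ be a $\V$-trajectory from $\sigma_j$ to $\tau_0$. It meets $P_0$ because $\tau_0=\alpha_{r+1}$ lies on $P_0$; its first simplex on $P_0$ is neither $\sigma_j$ nor $\sigma_0$ (these are $\V$-critical, whereas every non-initial $k$-simplex of a trajectory is the upper simplex of a $\V$-pair) nor any $\beta_s$ (a trajectory reaches $\beta_s$ only from its unique $\V$-partner $\alpha_s$, which also lies on $P_0$), hence is some $\alpha_t$. The crucial claim is that from $\alpha_t$ onward $Q$ coincides with the forward run $\alpha_t P_0\tau_0$ of $P_0$; granting it, $Q$ is an initial segment $Q'$ from $\sigma_j$ to $\alpha_t$ (meeting $P_0$ only at $\alpha_t$) followed by $\alpha_t P_0\tau_0$, and I would let $\Psi(Q)$ be $Q'$ followed by the backward run $\alpha_t\overline{P}_0\sigma_0$, a $\W$-trajectory from $\sigma_j$ to $\sigma_0$; that $\Psi$ and $\Phi$ are mutually inverse is then immediate. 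To prove the claim I would argue as follows: once $Q$ reaches $\alpha_t$ (with $t\le r$) its next step is forced to $\beta_t$; were $Q$ to leave $P_0$ at $\beta_t$, then the concatenation of $\sigma_0 P_0\beta_t$ with the portion of $Q$ from $\beta_t$ to $\tau_0$ would be a $\V$-trajectory from $\sigma_0$ to $\tau_0$ different from $P_0$ --- and a genuine (simple) trajectory, since a repeated simplex in it would yield a nontrivial closed $\V$-trajectory, contradicting the acyclicity of $\V$ --- which contradicts the uniqueness of $P_0$ granted by Theorem~\ref{T2.7}. Iterating along $\beta_t,\beta_{t+1},\dots,\beta_r$ forces $Q$ onto $P_0$ all the way to $\tau_0$.

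Finally I would compare weights. Put $e_i:=-\langle\beta_{i-1},\alpha_i\rangle\langle\beta_i,\alpha_i\rangle$ for $1\le i\le r$ and $f:=\langle\beta_r,\alpha_{r+1}\rangle$, so that $w(P_0)=\bigl(\prod_{i=1}^r e_i\bigr)f$. With $Q$, $\alpha_t$, $Q'$ as above and $\gamma$ the last $k$-simplex of $Q'$, the trajectories $Q$ and $\Psi(Q)$ agree along $Q'$, so all weight factors coming from $Q'$ --- including the incidence number $\langle\gamma,\alpha_t\rangle$ --- cancel in the quotient $w(\Psi(Q))/w(Q)$; the junction at $\alpha_t$ contributes the factor $\langle\beta_{t-1},\alpha_t\rangle\langle\beta_t,\alpha_t\rangle=-e_t$, while the remaining tail factors are $\prod_{j=1}^{t-1}e_j$ from $\alpha_t\overline{P}_0\sigma_0$ against $f\prod_{j=t+1}^{r}e_j$ from $\alpha_t P_0\tau_0$. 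Since every incidence number is $\pm1$,
\[\frac{w(\Psi(Q))}{w(Q)}=(-e_t)\cdot\Bigl(\prod_{j=1}^{t-1}e_j\Bigr)\Bigl(\prod_{j=t+1}^{r}e_j\Bigr)\cdot f=-\Bigl(\prod_{j=1}^{r}e_j\Bigr)f=-w(P_0),\]
with the edge cases $t=1$ and $t=r+1$ checked in the same way (the relevant products being empty). Summing over all $\V$-trajectories $Q$ from $\sigma_j$ to $\tau_0$ and using that $\Psi$ is a bijection onto the $\W$-trajectories from $\sigma_j$ to $\sigma_0$ then gives the claimed identity.

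The step I expect to be the main obstacle is the rigidity claim --- that a $\V$-trajectory from $\sigma_j$ to $\tau_0$, once it first touches $P_0$, must stay on $P_0$ to the end --- together with the verification that the ``surgered'' concatenations appearing in that argument are genuinely simple trajectories, so that the uniqueness of $P_0$ may be invoked; this is precisely where the acyclicity of $\V$ is doing the work, and everything afterwards is routine bookkeeping with $\pm1$ incidence numbers.
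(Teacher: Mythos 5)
Your proposal is correct and follows essentially the same route as the paper: the map you call $\Psi$, sending a $\V$-trajectory $Q$ from $\sigma_j$ to $\tau_0$ with first contact $\alpha_t$ on $P_0$ to $\sigma_jQ\alpha_t\overline{P}_0\sigma_0$, is exactly the bijection used in the paper's proof, and your $\pm 1$ bookkeeping yielding the factor $-w(P_0)$ matches its weight computation. Your added justification of the rigidity claim (that $Q$ must follow $P_0$ to $\tau_0$ after first touching it, via the uniqueness of $P_0$) makes explicit a step the paper only asserts.
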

	\begin{proof} 	Any $\V$-trajectory $P$ from $\sigma_j$ to $\tau_0$ first
		meets $P_0$ at some $(k-1)$-simplex, say $\alpha_t$, for some $t\in \{1,\ldots,
		r+1\}$,  and then follows along $P_0$ to $\tau_0$. In this case,
		$\sigma_jP\alpha_t\overline{P}_0\sigma_0$ is a $\W$-trajectory from $\sigma_j$
		to $\sigma_0$ (see Figure \ref{f5}). This gives a one-to-one correspondence
		between the $\V$-trajectories from $\sigma_j$ to $\tau_0$ and the
		$\W$-trajectories from $\sigma_j$ to $\sigma_0$. Moreover,
		\begin{align*}
			&w(\sigma_jP\alpha_t\overline{P}_0\sigma_0)\\
			=& w(\sigma_j P \alpha_t)\cdot (-\langle \beta_{t-1}, \alpha_t \rangle
			)\cdot w(\beta_{t-1}\overline{P}_0 \sigma_0)\\
			=& w(\sigma_j P \alpha_t)\cdot (-\langle \beta_{t-1}, \alpha_t \rangle
			)\cdot w(\sigma_0P_0\beta_{t-1})\\
			=& w(\sigma_0P_0\beta_{t-1})\cdot (-\langle \beta_{t-1},
			\alpha_t\rangle)\cdot w(\sigma_jP\alpha_t) \\
			=& w(\sigma_0P_0\beta_{t-1})\cdot (-\langle \beta_{t-1},
			\alpha_t\rangle)\cdot w(\sigma_jP\alpha_t) \cdot (\langle \beta_t,\alpha_t\rangle)\cdot \langle \beta_t,\alpha_t\rangle)\cdot( 	w(\beta_tP_0\tau_0) \cdot	w(\beta_tP_0\tau_0))\\
			=&- (w(\sigma_0P_0\beta_{t-1})\cdot (-\langle \beta_{t-1}, \alpha_t\rangle \cdot \langle \beta_t,\alpha_t\rangle)
			\cdot w(\beta_tP_0\tau_0))\cdot
			(w(\sigma_jP\alpha_t)\cdot(-\langle \beta_t, \alpha_t \rangle)\cdot
			w(\beta_tP_0\tau_0))\\
			=&-w(P_0) \cdot w(P).
		\end{align*}
		\begin{figure}[H]
			\hfill
			\begin{center}
				\begin{tikzpicture}
					\node at (0,0) {$P_0:(\sigma_0=)\beta_0^{(k)}$};
					\node at (2.7,0) {$\cdots$};
					\node at (4.5,0) {$\beta_{t-1}^{(k)}$};
					\node at (6.6,0) {$\alpha_{t}^{(k-1)}$};
					\node at (8.5,0) {$\beta_{t}^{(k)}$};
					\node at (10.2,0) {$\cdots$};
					\node at (12.6,0) {$\alpha_{r+1}^{(k-1)}(=\tau_0)$};
					\node at (3.7,-1.5) {$\sigma_j^{(k)}$};
					\node at (5.4,-1.5) {$\cdots$};
					
					\node at (5,-.6)
					{\scriptsize{\textcolor{red}{$\sigma_jP\alpha_t\overline{P}_0\sigma_0$}}};
					\node at (7,-1) {\textcolor{blue}{$P$}};

					\draw[->] (1.5,0.1)--(2.3,0.1);
					\draw[>->] (3,0.1)--(3.8,0.1);
					\draw[->] (5.1,0.1)--(5.9,0.1);
					\draw[>->] (7.2,0.1)--(8,0.1);
					\draw[->] (8.9,0.1)--(9.7,0.1);
					\draw[->] (10.5,0.1)--(11.3,0.1);

					\draw[->,red] (4.2,-1.4)--(5,-1.4);
					\draw[->,red] (5.8,-1.4) .. controls (6.34,-1.46) and (6.61,-.9) ..
					(6.4,-0.5);
					
					\draw[->,blue] (4.2,-1.6)--(5,-1.6);
					\draw[->,blue] (5.8,-1.6) .. controls (6.38,-1.5) and (6.66,-1.4) ..
					(6.7,-0.5);

					\draw[<-<,red] (1.5,-0.1)--(2.3,-0.1);
					\draw[<-,red] (3,-0.1)--(3.8,-0.1);
					\draw[<-<,red] (5.1,-0.1)--(5.9,-0.1);
					
					\draw[>->,blue] (7.2,-0.1)--(8,-0.1);
					\draw[->,blue] (8.9,-0.1)--(9.7,-0.1);
					\draw[->,blue] (10.5,-0.1)--(11.3,-0.1);
				\end{tikzpicture}
			\end{center}
			\caption{$P$ (blue) is a  $\V$-trajectory from $\sigma_j$ to $\tau_0$,
				which meets $P_0$ at $\alpha_t$. Then $\sigma_jP\alpha_t\overline{P}_0\sigma_0$ (red)
				is the $\W$-trajectory corresponding to $P$.} \label{f5}
		\end{figure}
		Taking sum of the weights of all $\W$-trajectories from $\sigma_j$ to
		$\sigma_0$, we get the desired result.

	\end{proof}
	
	\begin{lemma} \label{l3.4} For any $\sigma_j \in \operatorname{Crit}^{\W}_{k}(\K)$ and $\tau_i \in \operatorname{Crit}^{\W}_{k-1}(\K)$, the sum of the weights of all
		$\W$-trajectories from $\sigma_j$ to $\tau_i$, which include some simplices from
		the trajectory $P_0$, is given by
		\[\sum_{\substack{ P:P \text{ is a } \\ \W\text{-traj. from }\\ \sigma_j
				\text{ to } \tau_i,\\ P_0\cap P\neq \emptyset}}w(P)=\sum_{\substack{ P:P \text{ is a } \\ \V\text{-traj. from}\\ \text{}
				\sigma_j \text{ to } \tau_i \text{, }\\  P_0\cap P\neq \emptyset}}w(P)-
		\left(w(P_0)\cdot \sum_{\substack{P: P \text{ is a }\\ \V\text{-traj. from } \\
				\sigma_j \text{ to } \tau_0 }}w(P) \cdot \sum_{\substack{P: P \text{ is a
				}\\ \V\text{-traj. from } \\  \sigma_0 \text{ to } \tau_i }}w(P) \right)\]
		($P_0\cap P$ denotes the set of simplices which are in both of
		the trajectories $P_0$ and $P$).
	\end{lemma}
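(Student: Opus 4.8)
The plan is to organise each of the two sums by the simplex at which a trajectory \emph{first} meets $P_0$ and the simplex at which it \emph{last} leaves $P_0$, to factor every contribution as (head weight)$\,\times\,$($\pm1$ weight along $P_0$)$\,\times\,$(tail weight), and finally to notice that the index set governing the $\W$-trajectories and the one governing the $\V$-trajectories fit together into a full rectangle, so that the difference of the two sums factors as a product of two independent sums.

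First I would fix $\sigma_j\in\operatorname{Crit}^{\W}_k(\K)$, $\tau_i\in\operatorname{Crit}^{\W}_{k-1}(\K)$ and write $P_0:(\sigma_0{=})\beta_0,\alpha_1,\beta_1,\dots,\alpha_r,\beta_r,\alpha_{r+1}({=}\tau_0)$ as above. By Observation~\ref{ob1}(5), each $\W$-trajectory $\Gamma$ from $\sigma_j$ to $\tau_i$ with $P_0\cap\Gamma\ne\emptyset$ is uniquely $\sigma_j\,A\,\alpha_t\,\overline{P}_0\,\beta_s\,B\,\tau_i$ with $0\le s<t\le r+1$, where the head $A:=\sigma_j\Gamma\alpha_t$ satisfies $A\cap P_0=\{\alpha_t\}$ and the tail $B:=\beta_s\Gamma\tau_i$ satisfies $B\cap P_0=\{\beta_s\}$; by Observation~\ref{ob1}(1) such $A$, $B$ are simultaneously $\V$- and $\W$-trajectories. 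Dually, Observation~\ref{ob1}(4) gives that each $\V$-trajectory $P$ from $\sigma_j$ to $\tau_i$ with $P_0\cap P\ne\emptyset$ is uniquely $\sigma_j\,A\,\alpha_t\,P_0\,\beta_s\,B\,\tau_i$ with $1\le t\le s\le r$ and $A$, $B$ of the same type. For $0\le s\le r$, $0\le t\le r+1$ I set $W_s:=w(\beta_0P_0\beta_s)$ and $U_t:=w(\beta_0P_0\alpha_t)$; both lie in $\{1,-1\}$.

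Next I would compute the weights, in the style of the proof of Lemma~\ref{l3.3}: using that $w$ is multiplicative across a $k$-simplex (here $\beta_s$), resolving the junction at the $(k-1)$-simplex $\alpha_t$, and invoking the elementary identities linking sub-weights of $P_0$ to the $W_s$ and $U_t$ (e.g.\ $\langle\beta_t,\alpha_t\rangle W_t=-U_t$ and $w(\beta_tP_0\tau_0)=w(P_0)W_t$), one finds that in both cases the $P_0$-portion contributes the factor $U_tW_s$, so $w(\Gamma)=-\,w(A)\,U_tW_s\,w(B)$ and $w(P)=w(A)\,U_tW_s\,w(B)$. Writing
\[h_t:=\sum_{\substack{A\ \V\text{-traj.}\ \sigma_j\to\alpha_t\\ A\cap P_0=\{\alpha_t\}}}w(A),\qquad g_s:=\sum_{\substack{B\ \V\text{-traj.}\ \beta_s\to\tau_i\\ B\cap P_0=\{\beta_s\}}}w(B),\]
and using that in these recombinations every admissible head splices with every admissible tail (see below), I get
\[\sum_{\substack{\W\text{-traj.}\ \sigma_j\to\tau_i\\ P_0\cap P\ne\emptyset}}w(P)=-\!\!\sum_{0\le s<t\le r+1}\!\!U_tW_s\,h_tg_s,\qquad\sum_{\substack{\V\text{-traj.}\ \sigma_j\to\tau_i\\ P_0\cap P\ne\emptyset}}w(P)=\sum_{1\le t\le s\le r}\!\!U_tW_s\,h_tg_s.\]
Since $\{(t,s):0\le s<t\le r+1\}$ and $\{(t,s):1\le t\le s\le r\}$ are disjoint with union $\{(t,s):1\le t\le r+1,\ 0\le s\le r\}$, subtracting and factoring gives $-\bigl(\sum_{t=1}^{r+1}U_th_t\bigr)\bigl(\sum_{s=0}^{r}W_sg_s\bigr)$ for the difference. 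Decomposing $\V$-trajectories $\sigma_j\to\tau_0$ (which run along $P_0$ from the first meeting point to $\tau_0$) and $\sigma_0\to\tau_i$ the same way — via Observation~\ref{ob1}(2) — identifies $\sum_{t}U_th_t=w(P_0)\sum_{\V\text{-traj.}\ \sigma_j\to\tau_0}w$ and $\sum_{s}W_sg_s=\sum_{\V\text{-traj.}\ \sigma_0\to\tau_i}w$, and substituting yields the claimed formula.

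The step I expect to be the main obstacle is the assertion that an admissible head $A$ (a $\V$-trajectory $\sigma_j\to\alpha_t$ with $A\cap P_0=\{\alpha_t\}$) and an admissible tail $B$ (a $\V$-trajectory $\beta_s\to\tau_i$ with $B\cap P_0=\{\beta_s\}$) always splice into a genuine trajectory, i.e.\ share no simplex — and likewise the automatic matching in the auxiliary formulas for $\sum_{\V\text{-traj.}\ \sigma_j\to\tau_0}w$ and $\sum_{\V\text{-traj.}\ \sigma_0\to\tau_i}w$. This is exactly the acyclicity argument underlying Observation~\ref{ob1}: if $A$ and $B$ shared a simplex $\gamma\notin P_0$, then running the tail of $A$ from $\gamma$ to $\alpha_t$, then along $P_0$ (resp.\ $\overline{P}_0$) from $\alpha_t$ to $\beta_s$, then the head of $B$ from $\beta_s$ back to $\gamma$, would close up to a nontrivial closed $\V$- (resp.\ $\W$-) trajectory; verifying the junction conditions — in particular that the two $(k-1)$-simplices meeting $\gamma$ are distinct, which follows from each simplex lying in at most one pair — and treating the boundary cases $t=r+1$ and $s=0$ is where the care lies.
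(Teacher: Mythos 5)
Your argument is correct and is essentially the paper's own proof in different packaging: the paper realizes your rectangle identity $\{0\le s<t\le r+1\}\sqcup\{1\le t\le s\le r\}=\{1\le t\le r+1\}\times\{0\le s\le r\}$ as an explicit weight-(anti)preserving bijection $\phi:\T_1\times\T_2\to\Z_1\sqcup\Z_2$, obtained by splicing a whole $\W$-trajectory $\sigma_j\to\sigma_0$ with a whole $\V$-trajectory $\sigma_0\to\tau_i$ at the first-meeting/last-leaving simplices on $P_0$, with the same sign computation and the same final appeal to Lemma~\ref{l3.3}. The splicing concern you flag is resolved exactly as you suggest (acyclicity of $\V$ and $\W$ together with the uniqueness of $P_0$, i.e., Observation~\ref{ob1}), so there is no gap.
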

	\begin{proof}
		Let
		\begin{enumerate}[(i)] 
    	\item$\T_1$ be the set of all $\W$-trajectories from 
			$\sigma_j$ to $\sigma_0$,
	    \item$\T_2$ be the set of all $\V$-trajectories from $\sigma_0$ to $\tau_i$,
        \item$\Z_1$ be the set of all $\W$-trajectories from $\sigma_j$ to $\tau_i$,  which include some simplices from the trajectory $P_0$, and 
	    \item$\Z_2$ be the set of all $\V$-trajectories from $\sigma_j$ to  $\tau_i$, which include some simplices from the trajectory $P_0$.
		\end{enumerate} 
		From Observation \ref{ob1}, we have $
		\Z_1 \cap \Z_2=\emptyset$. Let $P_1 \in \Z_1$ and $P_2 \in \Z_2$ be two trajectories such that $\alpha_t$ is the $(k-1)$-simplex, where $P_1$ first
		meets $P_0$, and $\beta_s$ is the $k$-simplex where
		$P_2$ leaves $P_0$ (see Figure~\ref{f6} and Figure~\ref{f7}). Now, if $s\geq t$, then 
		$\sigma_jP_1\alpha_tP_0\beta_sP_2\tau_i$ (i.e., 	$\sigma_jP_1\alpha_tP_2\tau_i$) is a trajectory in $\Z_2$ (see Figure
		\ref{f6}). Moreover,
		\begin{align}
			\nonumber&w(\sigma_jP_1\alpha_tP_0\beta_sP_2\tau_i)\\
			\nonumber=& w(\sigma_jP_1 \alpha_t)\cdot (-\langle \beta_t, \alpha_t
			\rangle) \cdot w(\beta_t P_0 \beta_s) \cdot w(\beta_s P_2 \tau_i)\\
			\nonumber =& -w(\sigma_jP_1 \alpha_t)\cdot (\langle \beta_t, \alpha_t
			\rangle) \cdot  w(\beta_t P_0 \beta_s) \cdot w(\beta_s P_2 \tau_i)\cdot (-\langle
			\beta_{t-1},\alpha_t\rangle)\cdot (-\langle
			\beta_{t-1},\alpha_t\rangle) \cdot\\
			\nonumber&w(\sigma_0P_0 \beta_{t-1}) \cdot w(\beta_{t-1}\overline{P}_0\sigma_0)~~~~(\text{since, } w(\sigma_0P_0 \beta_{t-1}) = w(\beta_{t-1}\overline{P}_0\sigma_0))\\
			\nonumber=& -(w(\sigma_jP_1 \alpha_t)\cdot (-\langle
			\beta_{t-1},\alpha_t\rangle)\cdot w(\beta_{t-1}\overline{P}_0\sigma_0)) \cdot \\
			\nonumber &(w(\sigma_0P_0 \beta_{t-1})\cdot (-\langle \beta_{t-1}, \alpha_t \rangle \cdot
			\langle \beta_t, \alpha_t \rangle)\cdot w(\beta_t P_0 \beta_s)\cdot w(\beta_s P_2 \tau_i))\\
			=&-w(P_1)\cdot w(P_2). \label{e1}
		\end{align}
		Similarly, if $s<t$, then $\sigma_jP_1\alpha_t\overline{P}_0\beta_sP_2\tau_i$
		is a trajectory in $\Z_1$ (see Figure \ref{f7}) and
		\begin{align}
			\nonumber & w(\sigma_jP_1\alpha_t\overline{P}_0\beta_sP_2\tau_i)\\
			\nonumber= & w(\sigma_jP_1 \alpha_t)\cdot (-\langle
			\beta_{t-1},\alpha_t\rangle)\cdot w(\beta_{t-1}\overline{P}_0\beta_s)\cdot 
			w(\beta_s P_2 \tau_i)\\
				\nonumber= & w(\sigma_jP_1 \alpha_t)\cdot (-\langle
			\beta_{t-1},\alpha_t\rangle)\cdot w(\beta_{t-1}\overline{P}_0\beta_s)\cdot 
			w(\beta_s P_2 \tau_i)\cdot(w(\beta_{s}\overline{P}_0\sigma_0)\cdot w(\sigma_0P_0 \beta_{s}))\\
			\nonumber=& (w(\sigma_jP_1 \alpha_t)\cdot (-\langle
			\beta_{t-1},\alpha_t\rangle)\cdot w(\beta_{t-1}\overline{P}_0\beta_s)\cdot
			w(\beta_{s}\overline{P}_0\sigma_0))\cdot (w(\sigma_0P_0 \beta_{s})\cdot  w(\beta_sP_2 \tau_i))\\
			= &w(P_1)\cdot w(P_2).    \label{e2}
		\end{align}
		We now define a map $\phi:\T_1 \times \T_2 \rightarrow \Z_1 \sqcup
		\Z_2$ as follows. For $(P_1,P_2) \in \T_1 \times \T_2$, such that $\alpha_t$ is the simplex where
		$P_1$ meets the trajectory $P_0$, and $\beta_s$ 
		is the simplex where $P_2$ leaves the trajectory $P_0$,
		\[\phi((P_1,P_2))=\begin{cases}
			\sigma_jP_1\alpha_tP_0\beta_sP_2\tau_i, &\text{ if } s\geq t,\\
			\sigma_jP_1\alpha_t\overline{P}_0\beta_sP_2\tau_i, &\text{ if } s<t.
		\end{cases} \]
		\begin{figure}[H]
			\hfill
			\begin{center}
				\begin{tikzpicture}
					\node at (-0.5,0) {$P_0:(\sigma_0=)\beta_0^{(k)}$};
					\node at (2.3,0) {$\cdots$};
					\node at (4.4,0) {$\alpha_t^{(k-1)}$};
					
					\node at (6.5,0) {$\cdots$};
					\node at (8.3,0) {$\beta_s^{(k)}$};
					\node at (10.1,0) {$\cdots$};
					\node at (12.6,0) {$\alpha_{r+1}^{(k-1)}(=\tau_0)$};

					\node at (10.2,1.5) {$\cdots$}; 
					\node at (12.2,1.5) {$\tau_i^{(k-1)}$};

					\node at (1.8,-1.5) {$\sigma_j^{(k)}$};
					\node at (3.4,-1.5) {$\cdots$};

					\node at (3.8,-1) {\textcolor{red}{$P_1$}};
					\node at (8.7,1) {\textcolor{blue}{$P_2$}};
					\node at (7,-.8) {\scriptsize{$\sigma_jP_1\alpha_tP_0\beta_sP_2\tau_i$}};
					
					\draw[->] (1,0)--(1.8,0);
					\draw[->] (2.8,0)--(3.6,0);
					\draw[>->] (5.2,0)--(6,0);
					
					\draw[>->] (6.9,0)--(7.7,0);
					\draw[->] (8.8,0)--(9.6,0);
					\draw[->] (10.5,0)--(11.3,0);
					
					\draw[->,red] (2.2,-1.5)--(3,-1.5);
					\draw[->,red] (3.8,-1.5) .. controls (4.36,-1.47).. (4.4,-0.5);
					
					\draw[<-<,red] (1,-0.15)--(1.8,-0.15);
					\draw[<-<,red] (2.8,-0.15)--(3.6,-0.15);

					\draw[->,blue] (8.8,0.1)..controls (9.1,0.3) and (9.1,1.4) .. (9.7,1.5);
					\draw[->,blue] (10.7,1.5)--(11.5,1.5);
					
					\draw[->,blue] (1,0.15)--(1.8,0.15);
					\draw[->,blue] (2.8,0.15)--(3.6,0.15);
					\draw[>->,blue] (5.2,0.15)--(6,0.15);
					\draw[>->,blue] (6.9,0.15)--(7.7,0.15);
					
					\draw[->,rounded corners=13pt,thick,dotted]
					(2,-1.9)--(4.7,-1.9)--(4.8,-0.5)--(9.15,-.5)--(9.7,1.1)--(11.6,1.1);
					
				\end{tikzpicture}
			\end{center}
			\caption{$P_1$ (red) is a $\W$-trajectory from $\sigma_j$ to $\sigma_0$
				which meets $P_0$ at $\alpha_t$. $P_2$ (blue) is a $\V$-trajectory from
				$\sigma_0$ to $\tau_i$ which leaves $P_0$ at $\beta_s$. For $s\geq t$,
				$\phi((P_1,P_2))=\sigma_jP_1\alpha_tP_0\beta_sP_2\tau_i$ (dotted black).}
			\label{f6}
		\end{figure}
		\begin{figure}[H]
			\hfill
			\begin{center}
				\begin{tikzpicture}
					\node at (-0.5,0) {$P_0:(\sigma_0=)\beta_0^{(k)}$};
					\node at (2.3,0) {$\cdots$};
					\node at (4.2,0) {$\beta_s^{(k)}$};
					
					\node at (6.2,0) {$\cdots$};
					\node at (8.2,0) {$\alpha_t^{(k-1)}$};
					\node at (10.1,0) {$\cdots$};
					\node at (12.6,0) {$\alpha_{r+1}^{(k-1)}(=\tau_0)$};

					\node at (6.2,1.5) {$\cdots$}; 
					\node at (8.2,1.5) {$\tau_i^{(k-1)}$};

					\node at (5.6,-1.5) {$\sigma_j^{(k)}$};
					\node at (7.2,-1.5) {$\cdots$};

					\node at (7.7,-1) {\textcolor{red}{$P_1$}};
					\node at (4.7,1) {\textcolor{blue}{$P_2$}};
					\node at (9.8,-.8) {\scriptsize{{$
								\sigma_jP_1\alpha_t\overline{P}_0\beta_sP_2\tau_i$}}};
					
					\draw[->] (1,0)--(1.8,0);
					\draw[>->] (2.8,0)--(3.6,0);
					\draw[->] (4.8,0)--(5.6,0);
					\draw[->] (6.6,0)--(7.4,0);
					\draw[>->] (8.8,0)--(9.6,0);
					\draw[->] (10.5,0)--(11.3,0);
					
					\draw[->,red] (6,-1.5)--(6.8,-1.5);
					\draw[->,red] (7.6,-1.5) .. controls (8.16,-1.47).. (8.2,-0.5);
					
					\draw[<-<,red] (1,-0.15)--(1.8,-0.15);
					\draw[<-,red] (2.8,-0.15)--(3.6,-0.15);
					\draw[<-<,red] (4.8,-0.15)--(5.6,-0.15);
					\draw[<-<,red] (6.6,-0.15)--(7.4,-0.15);
					
					\draw[->,blue] (4.8,0.1)..controls (5.1,0.3) and (5.1,1.4) .. (5.7,1.5);
					\draw[->,blue] (6.7,1.5)--(7.5,1.5);
					
					\draw[->,blue] (1,0.15)--(1.8,0.15);
					\draw[->,blue] (2.8,0.15)--(3.6,0.15);

					\draw[->,rounded corners=12pt,thick,dotted]
					(5.6,-1.9)--(8.5,-1.9)--(8.8,0.4)--(5.3,.4)--(5.5,1.15)--(8.2,1.15);
					
				\end{tikzpicture}
			\end{center}
			\caption{$P_1$ (red) is a $\V$-trajectory from $\sigma_j$ to $\sigma_0$
				which meets $P_0$ at $\alpha_t$. $P_2$ (blue) is a $\W$-trajectory from
				$\sigma_0$ to $\tau_i$ which leaves $P_0$ at $\beta_s$. For $s < t$,
				$\phi((P_1,P_2))= \sigma_jP_1\alpha_t\overline{P}_0\beta_sP_2\tau_i$ (dotted
				black).} \label{f7}
		\end{figure}
		We note that $\phi$ is a bijection, and thus it follows that
		\begin{align*}
			\sum_{\substack{(P_1,P_2)\in \T_1\times \T_2,\\ \phi((P_1,P_2))\in \Z_1}}
			w(\phi((P_1,P_2)))=& \sum_{P \in \Z_1}w(P)\\
			\text{  and }\sum_{\substack{(P_1,P_2)\in \T_1\times \T_2,\\
					\phi((P_1,P_2))\in \Z_2}} w(\phi((P_1,P_2)))=& \sum_{P \in \Z_2}w(P).
		\end{align*}
		So, we get
		\begin{align}
		\nonumber	\sum_{P \in \Z_1}w(P)-\sum_{P \in \Z_2}w(P)
			= &\sum_{\substack{(P_1,P_2)\in \T_1\times \T_2,\\ \phi((P_1,P_2))\in
					\Z_1}} w(\phi((P_1,P_2)))- \sum_{\substack{(P_1,P_2)\in \T_1\times \T_2,\\
					\phi((P_1,P_2))\in \Z_2}} w(\phi((P_1,P_2)))\\
		\nonumber	= &\sum_{\substack{(P_1,P_2)\in \T_1\times \T_2,\\ \phi((P_1,P_2))\in
					\Z_1}} w(\sigma_jP_1\alpha_t\overline{P}_0\beta_sP_2\tau_i)-
			\sum_{\substack{(P_1,P_2)\in \T_1\times \T_2,\\ \phi((P_1,P_2))\in \Z_2}}
			w(\sigma_jP_1\alpha_tP_0\beta_sP_2\tau_i)\\
		\nonumber	= &\sum_{\substack{(P_1,P_2)\in \T_1\times \T_2,\\ \phi((P_1,P_2))\in \Z_1}}
			w(P_1)\cdot w(P_2) + \sum_{\substack{(P_1,P_2)\in \T_1\times \T_2, \\
					\phi((P_1,P_2))\in \Z_2}} w(P_1)\cdot w(P_2) \left(\substack{\text{from Eqn.}~
				\eqref{e1}\\ \text{ and Eqn.}~ \eqref{e2}}\right)\\
		\nonumber	= &\sum_{\substack{(P_1,P_2)\in \T_1\times \T_2}} w(P_1)\cdot w(P_2)\\
			=  &\sum_{P_1\in \T_1} w(P_1)\cdot \sum_{P_2\in \T_2} w(P_2). \label{eq3}
		\end{align}
		Therefore, we get
		\begin{align*}
			\sum_{\substack{ P:P \text{ is a } \\ \W\text{-traj. from }\\ \sigma_j
					\text{ to } \tau_i,\\ P_0\cap P\neq \emptyset}}w(P) &=\sum_{P \in \Z_1}w(P)\\
			 &= \sum_{P \in \Z_2}w(P)+\sum_{P_1 \in \T_1} w(P_1)\cdot \sum_{P_2 \in
				\T_2} w(P_2) \text{ (from Equation~(\ref{eq3}))}\\
		     &=  \sum_{P \in \Z_2}w(P)+\left(-w(P_0)\cdot \sum_{\substack{P: P \text{ is a } \\ \V\text{-traj.
						from } \\  \sigma_j \text{ to } \tau_0 }}w(P)\right)\cdot \sum_{P_2 \in \T_2}
		    	w(P_2) \text{    (from Lemma~\ref{l3.3})}\\
		     &=\sum_{\substack{ P:P \text{ is a } \\ \V\text{-traj. from }\\ \sigma_j
					\text{ to } \tau_i,\\ P_0\cap P\neq \emptyset}}w(P)- \left(w(P_0)\cdot
			\sum_{\substack{P: P \text{ is a }\\ \V\text{-traj. from } \\   \sigma_j \text{
						to } \tau_0 }}w(P) \cdot \sum_{\substack{P: P \text{ is a }\\ \V\text{-traj.
						from } \\ \sigma_0 \text{ to } \tau_i }}w(P) \right).
		\end{align*}
		
	\end{proof}
	
	We now proceed to	the proof of Proposition \ref{P3.1.}.
	\begin{proof}[Proof of Proposition~\ref{P3.1.}] 
		We have, for all $j\in \{0, \dots , n\}$, $\partial_k^{\V}(\sigma_j)=
		\sum_{i=0}^m a_{ij}\tau_i$ , where
		\[a_{ij}= \sum_{\substack{ P:P \text{ is a }\\ \V\text{-traj. from} \\ 
				\sigma_j \text{ to } \tau_i }}w(P).\]
		Since, $P_0$ is the unique $\V$-trajectory from $\sigma_0$ to $\tau_0$,
		$a_{00}=w(P_0)$.
		For any $\W$-critical $k$-simplex $\sigma_j$ and
		any $\W$-critical $(k-1)$-simplex $\tau_i$, the
		coefficient of $\tau_i$ in $\partial_k^{\W}(\sigma_j)$ is
		\[\sum_{\substack{ P:P \text{ is a }\\ \W\text{-traj. from } \\  \sigma_j
				\text{ to } \tau_i }}w(P).\]

		Now,
		\begin{align*}
			\sum_{\substack{ P:P \text{ is a }\\ \W\text{-traj. from} \\  \sigma_j
					\text{ to } \tau_i }}w(P) 
			=&\sum_{\substack{ P:P \text{ is a } \\ \W\text{-traj. from }\\  \sigma_j
					\text{ to } \tau_i, \\ P_0\cap P = \emptyset}}w(P) + \sum_{\substack{ P:P \text{
						is a } \\ \W\text{-traj. from }\\  \sigma_j \text{ to } \tau_i, \\ P_0\cap P
					\neq \emptyset}}w(P)\\
			= &\sum_{\substack{ P:P \text{ is a } \\ \V\text{-traj. from }\\  \sigma_j
					\text{ to } \tau_i, \\ P_0\cap P = \emptyset}}w(P) + \sum_{\substack{ P:P \text{
						is a } \\ \W\text{-traj. from }\\  \sigma_j \text{ to } \tau_i, \\ P_0\cap P
					\neq \emptyset}}w(P) \text{  (by Observation \ref{ob1}.(1))}\\
			= &\sum_{\substack{ P:P \text{ is a } \\ \V\text{-traj. from }\\  \sigma_j
					\text{ to } \tau_i, \\ P_0\cap P = \emptyset}}w(P) + \sum_{\substack{ P:P \text{
						is a } \\ \V\text{-traj. from }\\  \sigma_j \text{ to } \tau_i, \\ P_0\cap P
					\neq \emptyset}}w(P)\\
			&- \left(w(P_0)\cdot \sum_{\substack{ P:P \text{ is a }\\ \V\text{-traj.
						from } \\ \sigma_j \text{ to } \tau_0 }}w(P) \cdot \sum_{\substack{ P:P
					\text{ is a }\\ \V\text{-traj. from } \\  \sigma_0 \text{ to } \tau_i}}
			w(P)\right) \text{  (by Lemma \ref{l3.4})}\\ 
			= &\sum_{\substack{ P:P \text{ is a } \\ \V\text{-traj. from }\\  \sigma_j
					\text{ to } \tau_i}}w(P) - \left(w(P_0)\cdot \sum_{\substack{ P:P \text{ is a
					}\\ \V\text{-traj. from } \\ \sigma_j \text{ to } \tau_0 }}w(P) \cdot
			\sum_{\substack{ P:P \text{ is a }\\ \V\text{-traj. from } \\  \sigma_0 \text{
						to } \tau_i}} w(P)\right)\\
			=& a_{ij}-a_{00}a_{0j}a_{i0}.
		\end{align*}
		
	\end{proof}	
	\begin{remark} \label{r3.5}
		Considering the matrix representation of $\partial^{\V}_k$, we get the following.
		\begin{align*}
		& \partial^{\V}_k  =
		\begin{pNiceMatrix}[first-col,first-row, last-col=5]
			   ~   &\sigma_0&\sigma_1& \ldots &\sigma_n&  \\
			\tau_0 & a_{00} & a_{01} & \ldots & a_{0n} &  \textcolor{blue}{R_0}\\
			\tau_1 & a_{10} & a_{11} & \ldots & a_{1n} &  \textcolor{blue}{R_1}\\
			\vdots	~   & \vdots & \vdots & ~      & \vdots & ~\textcolor{blue}{\vdots}\\
			\tau_i & a_{i0} \Block[draw,rounded-corners,color=blue]{1-4}{} & a_{i1} & \ldots & a_{in} & \textcolor{blue}{R_i}\\ 
			\vdots ~& \vdots & \vdots & ~ & \vdots & ~\textcolor{blue}{\vdots}\\
			\tau_m & a_{m0} & a_{m1} & \ldots & a_{mn} & \textcolor{blue}{R_m}
		\end{pNiceMatrix}
          \xrightarrow{a_{00}\cdot R_0}
	    \begin{pNiceMatrix}[first-col,first-row, last-col=5]
	    	~ &\sigma_0 & \sigma_1 & \ldots & \sigma_n &  \\
	    	\tau_0 & 1 & a_{00}a_{01} & \ldots & a_{00}a_{0n} & \textcolor{blue}{R_0}\\
	    	\tau_1 & a_{10} & a_{11} & \ldots & a_{1n} & \textcolor{blue}{R_1} \\
	    	\vdots~ & \vdots & \vdots & ~ & \vdots & ~\textcolor{blue}{\vdots}\\
	    	\tau_i & a_{i0} \Block[draw,rounded-corners,color=blue]{1-4}{} & a_{i1} & \ldots & a_{in} & \textcolor{blue}{R_i}\\ 
	    	\vdots~ & \vdots & \vdots & ~ & \vdots & ~\textcolor{blue}{\vdots}\\
	    	\tau_m & a_{m0} & a_{m1} & \ldots & a_{mn} & \textcolor{blue}{R_m}
	    \end{pNiceMatrix}
        ~~~\left(\substack{\text{since, }\\ a_{00}=\pm 1}\right)\\
        \end{align*}
        \begin{align*}
           & \xrightarrow [\text{for each } i\geq 1]{ R_i- a_{i0}\cdot R_0} 
    	 \begin{pNiceMatrix}[first-col,first-row, last-col=7]
    		~ &\sigma_0 & \sigma_1 & \ldots & \sigma_j & \ldots & \sigma_n &  \\
    		\tau_0 & 1 & a_{00}a_{01} & \ldots &  a_{00}a_{0j}& \ldots & a_{00}a_{0n} & \textcolor{blue}{R_0}\\ 
    		\tau_1 & 0 & a_{11}-a_{10} \cdot a_{00}a_{01} \Block[draw,rounded-corners,color=black]{5-5}{} & \ldots & a_{1j}-a_{10} \cdot a_{00}a_{0j} & \ldots & a_{1n}-a_{10} \cdot a_{00}a_{0n} & \textcolor{blue}{R_1}\\
    	    \vdots ~	& \vdots & \vdots & ~ & \vdots &  ~ &\vdots & ~\textcolor{blue}{\vdots}\\
    		\tau_i & 0 \Block[draw,rounded-corners,color=blue]{1-6}{} & a_{i1}-a_{i0} \cdot a_{00}a_{01} & \ldots & a_{ij}-a_{i0} \cdot a_{00}a_{0j}& \ldots & a_{in}-a_{i0} \cdot a_{00}a_{0n} & \textcolor{blue}{R_i}\\ 
    		\vdots ~ & \vdots & \vdots & ~ & \vdots & ~ &\vdots & ~\textcolor{blue}{\vdots}\\
    		\tau_m & 0 & a_{m1}-a_{m0} \cdot a_{00}a_{01} & \ldots& a_{mj}-a_{m0} \cdot a_{00}a_{0j} & \cdots & a_{mn}-a_{m0} \cdot a_{00}a_{0n} & \textcolor{blue}{R_m}
    		\CodeAfter \tikz [red] \draw (1.5-|1) -- (1.5-|last) (1-|1.5) -- (last-|1.5) ;
    	\end{pNiceMatrix}\\
       & \begin{tikzpicture}
        	\node at (0,0) {~};
        	\node at (9.5,0) {\rotatebox[origin=c]{270}{=}};
        	\node at (9.5,-.5) {$\partial^{\W}_k$};
        \end{tikzpicture}
    	\end{align*}	
        
	\end{remark}
	\subsection{Analogous result for the co-Morse complex}
	
	For a simplicial complex $\K$ and a gradient vector field $\V$ defined on it, a
	co-$\V$-trajectory from a $(q-1)$-simplex $\tau$ to a $q$-simplex $\sigma$ is
	just a $\V$-trajectory from $\sigma$ to $\tau$ with the order of the sequence
	reversed. Moreover, the weights of the co-$\V$-trajectories and the corresponding
	$\V$-trajectories are same. So we can say,
	\[ \sum_{\substack {P:P \text{ is a }\\ \V \text{-traj. from } \\ \sigma \text{
				to } \tau }} w(P) =  \sum_{\substack {Q:Q \text{ is a }\\
			\text{co-}\V\text{-traj. from } \\ \tau \text{ to } \sigma }} w(\overline{P}).\]
	
	This implies, for any $\sigma\in \operatorname{Crit}_q^{\V}(\K)$ and $\tau\in
	\operatorname{Crit_{q-1}^{\V}}(\K)$, the coefficient of $\tau$ in
	$\partial_q^{\V}(\sigma)$ and the coefficient of $\sigma$ in
	$\delta_{q}^{\V}(\tau)$ are equal. Hence, the matrix representation of
	$\delta_{q}^{\V}$ is just the transpose of the matrix of
	$\partial_q^{\V}$. Therefore, by an analogous argument used while proving
	Theorem~ \ref{T1.1}, we obtain the following for the coboundary operators of
	the co-Morse complex of $\K$.
	\begin{theorem} \label{T3.3}
		Let $\K$ be a $d$-dimensional simplicial complex with an assigned gradient
		vector field $\V$. Let $(\sigma_0^{(k)},\tau_0^{(k-1)})$ be a cancellable
		critical pair,  for some $k \in \{1, \dots, d\}$, and $\W$ be the gradient
		vector field obtained by cancelling $(\sigma_0^{(k)},\tau_0^{(k-1)})$ from $\V$.
		Let $\delta_q^{\V}: C_{q-1}^{\V}(\K) \rightarrow C_{q}^{\V}(\K)$ and
		$\delta_q^{\W}: C_{q-1}^{\W}(\K) \rightarrow C_{q}^{\W}(\K)$ be the $q$-th
		coboundary maps of the co-Morse complexes of $\K$ corresponding to $\V$ and
		$\W$, respectively. Then the following hold.
		\begin{enumerate}[(1)]
			\item For $q>k+1$ or $q<k-1$,
			\[ \delta_q^{\W}= \delta_q^{\V}.\]
			
			\item  If  $\operatorname{Crit}^{\W}_{k-1}(\K)= \{ \tau_1, \dots, \tau_m \}$
			and for any $\beta \in \operatorname{Crit}_{k-2}^{\W}$,
			$\delta_{k-1}^{\V}(\beta)= \sum_{i=0}^{m} b_i\tau_i$, then, 
			\[\delta_{k-1}^{\W}(\beta)= \sum_{i=1}^{m} b_i\tau_i.\]
			
			\item The boundary operator $\delta_{k+1}^{\W}$ is the restriction of
			$\delta_{k+1}^{\V}$ to the subgroup $C_{k}^{\W}(\K)$, i.e.,
			\[\delta_{k+1}^{\W}=\delta_{k+1}^{\V}\big|_{C_{k}^{\W}(\K)}.\]

			\item Let $\operatorname{Crit}^{\V}_{k}(\K)= \{ \sigma_0, \sigma_1, \dots,
			\sigma_n \}$, 
			$\operatorname{Crit}^{\V}_{k-1}(\K)= \{ \tau_0, \tau_1, \dots, \tau_m \}$, and
			for all $i\in \{0, \dots , m\}$, $\delta_k^{\V}(\tau_i)= \sum_{j=0}^n
			a_{ji}\sigma_j$ . Then, for all $j\in\{1, \dots , m\}$,
			\[\delta_k^{\W}(\tau_i)= \sum_{j=1}^n (a_{ji}-a_{00}a_{0i}a_{j0}) \cdot
			\sigma_j.\]
		\end{enumerate}

	\end{theorem}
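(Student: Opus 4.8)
The plan is to deduce Theorem~\ref{T3.3} from Theorem~\ref{T1.1} by means of the duality between trajectories and co-trajectories recorded just before the statement: a co-$\V$-trajectory from a $(q-1)$-simplex $\tau$ to a $q$-simplex $\sigma$ is the same as a $\V$-trajectory from $\sigma$ to $\tau$ traversed in reverse, and the two have equal weight. Consequently, for every $q$ and for each of $\V$ and $\W$, the matrix of $\delta_q$ (rows indexed by the critical $q$-simplices, columns by the critical $(q-1)$-simplices) is the transpose of the matrix of $\partial_q$. So I would obtain each clause of Theorem~\ref{T3.3} from the corresponding clause of Theorem~\ref{T1.1} by transposing, keeping in mind that transposition interchanges rows with columns, hence elementary row operations with elementary column operations, and ``restriction to a coordinate subgroup'' (deletion of columns) with ``deletion of rows''.

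For part~(1), when $q>k+1$ or $q<k-1$ we have $q,q-1\notin\{k-1,k\}$, so $C_q^{\W}(\K)=C_q^{\V}(\K)$ and $C_{q-1}^{\W}(\K)=C_{q-1}^{\V}(\K)$; Theorem~\ref{T1.1}(1) gives $\partial_q^{\W}=\partial_q^{\V}$, and transposing yields $\delta_q^{\W}=\delta_q^{\V}$. For part~(2), Theorem~\ref{T1.1}(3) says the matrix of $\partial_{k-1}^{\W}$ is that of $\partial_{k-1}^{\V}$ with the $\tau_0$-column deleted; transposing, the matrix of $\delta_{k-1}^{\W}$ is that of $\delta_{k-1}^{\V}$ with the $\tau_0$-\emph{row} deleted, which says precisely that $\delta_{k-1}^{\V}(\beta)=\sum_{i=0}^m b_i\tau_i$ implies $\delta_{k-1}^{\W}(\beta)=\sum_{i=1}^m b_i\tau_i$. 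Dually, for part~(3), Theorem~\ref{T1.1}(2) says the matrix of $\partial_{k+1}^{\W}$ is that of $\partial_{k+1}^{\V}$ with the $\sigma_0$-row deleted; transposing, the matrix of $\delta_{k+1}^{\W}$ is that of $\delta_{k+1}^{\V}$ with the $\sigma_0$-column deleted, i.e.\ $\delta_{k+1}^{\W}=\delta_{k+1}^{\V}\big|_{C_k^{\W}(\K)}$.

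For part~(4), I would invoke Proposition~\ref{P3.1.} together with the duality. In the notation of Theorem~\ref{T3.3}, $a_{ji}$ denotes the coefficient of $\sigma_j$ in $\delta_k^{\V}(\tau_i)$, which by duality equals the coefficient of $\tau_i$ in $\partial_k^{\V}(\sigma_j)$; thus $\partial_k^{\V}$ is exactly the operator handled by Proposition~\ref{P3.1.}, with the symbol ``$a_{ij}$'' there playing the role of $a_{ji}$ here, ``$a_{0j}$'' of $a_{j0}$, and ``$a_{i0}$'' of $a_{0i}$. Proposition~\ref{P3.1.} then gives that the coefficient of $\tau_i$ in $\partial_k^{\W}(\sigma_j)$ is $a_{ji}-a_{00}a_{j0}a_{0i}$, and applying duality once more (now to $\W$) this is also the coefficient of $\sigma_j$ in $\delta_k^{\W}(\tau_i)$; since the entries are integers it equals $a_{ji}-a_{00}a_{0i}a_{j0}$, which is the asserted formula. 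Equivalently, transposing the row operations of Theorem~\ref{T1.1}(4), the matrix of $\delta_k^{\W}$ is obtained from that of $\delta_k^{\V}$ by the column operations $C_i\mapsto C_i-a_{0i}a_{00}C_0$ for $i\geq1$, followed by deleting the $0$-th row and the $0$-th column.

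The computation is routine once the transposition dictionary is set up; the one point requiring care — and the closest thing to an obstacle — is the index bookkeeping: being careful about which family of critical simplices labels rows and which labels columns, and that the two subscripts of $a_{\bullet\bullet}$ swap their meaning in passing from $\partial_k$ to $\delta_k$, so that a ``restriction'' statement transposes into a ``row-deletion'' statement and conversely. I do not expect a real difficulty. Alternatively, as the authors indicate, one could repeat the trajectory-by-trajectory argument of the previous section with $\V$-trajectories replaced by co-$\V$-trajectories and $P_0$ replaced by its reverse; the transposition route is shorter because it uses Theorem~\ref{T1.1} as a black box.
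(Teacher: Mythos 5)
Your proposal is correct and follows essentially the same route as the paper: the authors likewise observe that a co-$\V$-trajectory is a reversed $\V$-trajectory of equal weight, conclude that the matrix of $\delta_q^{\V}$ is the transpose of that of $\partial_q^{\V}$, and then obtain Theorem~\ref{T3.3} by transposing Theorem~\ref{T1.1}. Your additional care with the index bookkeeping in part~(4) only makes explicit what the paper leaves as ``an analogous argument.''
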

\section*{Acknowledgements}
 The authors would like to thank the first anonymous reviewer of the journal version of this article for corrections and other valuable suggestions which substantially improved the presentation of the article.

	\begin{appendices} 
	\section{An example of simultaneous cancellations} \label{A1}
	In \cite[Section 4]{mondal}, the (discrete) Morse homology groups of the \emph{matching complex} of the complete graph of order 7 are computed with respect to a `near optimal' gradient vector field $\V$ (denoted by $\M^*$ in \cite{mondal}). There are four $\V$-critical $1$-simplices and twenty four $\V$-critical $2$-simplices. If $\eta_1,\eta_2,\ldots,\eta_{24}$ are the critical $2$-simplices and $\sigma_1,\ldots,\sigma_4$ are the critical $1$-simplices, then  $\partial_1^{\V}(\sigma_i)=0$, for all $i \in \{1,\ldots,4\}$, and the following table (reproduced from \cite[Table~1]{mondal}) represents the images of the $2$-simplices under the boundary map $\partial_2^{\V}$. 
	
	\begin{table}[htbp]
		\centering
			\begin{tblr}{|[1pt]c|c|[1pt]c|c|[1pt]c|c|[1pt]c|c|[1pt]}
			\hline[1pt]
			$\eta$ & $\partial_2^{\V}(\eta)$ & $\eta$ & $\partial_2^{\V}(\eta)$ & $\eta$ & $\partial_2^{\V}(\eta)$ & $\eta$ & $\partial_2^{\V}(\eta)$\\
			\hline[1pt]
			
			$\eta_{1}$ & $\sigma_2-\sigma_3$&
			
			$\eta_{2}$ & $\sigma_1-\sigma_2-\sigma_3$ &
			
			$\eta_{3}$ & $\sigma_1-\sigma_3+\sigma_4$ &
			
			$\eta_{4}$ & $\sigma_2+\sigma_3-\sigma_4$\\
			\hline
			
			$\eta_{5}$ & $\sigma_1-\sigma_4$&
			
			$\eta_{6}$ & $-\sigma_1+\sigma_2-\sigma_4$ &
			
			$\eta_{7}$ & $-\sigma_1+\sigma_4$ &
			
			$\eta_{8}$ & $-\sigma_2+\sigma_3$\\
			\hline
			
			$\eta_{9}$ & $-\sigma_1+\sigma_2+\sigma_3$ &
			
			$\eta_{10}$ & $-\sigma_1+\sigma_3-\sigma_4$ &
			
			$\eta_{11}$ & $-\sigma_1+\sigma_2+\sigma_3$ &
			
			$\eta_{12}$ & $\sigma_1-\sigma_2+\sigma_4$\\
			\hline
			
			$\eta_{13}$ & $-\sigma_2-\sigma_3+\sigma_4$ &
			
			$\eta_{14}$ & $-\sigma_2-\sigma_3+\sigma_4$ &
			
			$\eta_{15}$ & $\sigma_2-\sigma_3$ &
			
			$\eta_{16}$ & $\sigma_1-\sigma_4$\\
			\hline
			
			$\eta_{17}$ & $\sigma_1-\sigma_2+\sigma_4$ &
			
			$\eta_{18}$ & $-\sigma_1+\sigma_4$ &
			
			$\eta_{19}$ & $-\sigma_1+\sigma_2-\sigma_4$ &
			
			$\eta_{20}$ & $\sigma_1-\sigma_2-\sigma_3$\\
			\hline
			
			$\eta_{21}$ & $-\sigma_1+\sigma_3-\sigma_4$ &
			
			$\eta_{22}$ & $-\sigma_2+\sigma_3$ &
			
			$\eta_{23}$ & $\sigma_2+\sigma_3-\sigma_4$ &
			
			$\eta_{24}$ & $\sigma_1-\sigma_3+\sigma_4$\\
			\hline[1pt]
		\end{tblr}
		\caption{Images of all critical $2$-simplices under the boundary operator $\partial_2^{\V}$.}\label{boundary-table} \label{Table1}
	\end{table}
    From these we deduce that the first Morse homology group is $\mathbb{Z}_3$.
     
    In \cite[Subsection~4.3]{mondal}, it is shown that the critical pairs  $(\eta_{8}^{(2)},\sigma_{3}^{(1)})$ and $(\eta_{18}^{(2)},\sigma_4^{(1)})$ satisfy Theorem~\ref{t2.9}, and thus, they are simultaneously cancellable. Let $\W_1$ be the gradient vector field obtained after cancelling $(\eta_{8}^{(2)},\sigma_{3}^{(1)})$ from $\V$, and $\W_2$ be the gradient vector field obtained after cancelling $(\eta_{18}^{(2)},\sigma_4^{(1)})$ from $\W_1$.

    We obtain the boundary operators $\partial_2^{(\W_1)}$ and $\partial_2^{(\W_2)}$, from $\partial_2^{(\V)}$ and $\partial_2^{(\W_2)}$, respectively, by a sequence of elementary row operations (see Remark~\ref{r3.5}) as follows.

\[(\partial_2^{\V})^T = 
\begin{pNiceMatrix}[first-row,first-col,r]
	      & \sigma_1 & \sigma_2 & \sigma_3 & \sigma_4 \\
	\eta_1& 0&  1& -1 \Block[draw,rounded-corners,color=red]{*-1}{} &  0\\
	\eta_2& 1& -1& -1&  0\\
	\eta_3& 1&  0& -1&  1\\
	\eta_4& 0&  1&  1& -1\\
	\eta_5&  1&  0&  0& -1\\
	\eta_6& -1&  1&  0& -1\\
	\eta_7& -1&  0&  0&  1\\
	\eta_8& 0 \Block[draw,rounded-corners,color=red]{1-*}{} & -1&  1&  0\\
	\eta_9& -1&  1&  1&  0\\
	\eta_{10}& -1&  0&  1& -1\\
	\eta_{11}&   -1&  1&  1&  0\\
	\eta_{12}&  1& -1&  0&  1\\
	\eta_{13}&   0& -1& -1&  1\\
	\eta_{14}&   0& -1& -1&  1\\
	\eta_{15}&    0&  1& -1&  0\\
	\eta_{16}&    1&  0&  0& -1\\
	\eta_{17}&   1& -1&  0&  1\\
	\eta_{18}&   -1&  0&  0&  1\\
	\eta_{19}&   -1&  1&  0& -1\\
	\eta_{20}&   1& -1& -1&  0\\
	\eta_{21}&  -1&  0&  1& -1\\
	\eta_{22}&  0& -1&  1&  0 \\
	\eta_{23}&  0&  1&  1& -1 \\
	\eta_{24}&  1&  0& -1&  1	
\end{pNiceMatrix},
(\partial_2^{\W_1})^T = 
\begin{pNiceMatrix}[first-row,first-col,r]
	~ &\sigma_1 & \sigma_2 & \sigma_4 \\
	\eta_1&	0&  0&  0 \Block[draw,rounded-corners,color=red]{*-1}{} \\
	\eta_2&	1& -2&  0\\
	\eta_3&	1&  -1&  1\\
	\eta_4&	0&  2& -1\\
	\eta_5&	1&  0& -1\\
	\eta_6&	-1&  1& -1\\
	\eta_7&	-1&  0&  1\\
	\eta_9&	-1&  2&  0\\
	\eta_{10}&	-1&  1& -1\\
	\eta_{11}&	-1&  2&  0\\
	\eta_{12}&	1& -1&  1\\
	\eta_{13}&	0& -2&  1\\
	\eta_{14}&	0& -2&  1\\
	\eta_{15}&	0&  0&  0\\
	\eta_{16}&	1&  0& -1\\
	\eta_{17}&	1& -1&  1\\
	\eta_{18}&	-1 \Block[draw,rounded-corners,color=red]{1-*}{}&  0&  1\\
	\eta_{19}&	-1&  1& -1\\
	\eta_{20}&	1& -2&  0\\
	\eta_{21}&	-1&  -1& -1\\
	\eta_{22}&	0& 0& 0\\
	\eta_{23}&	0&  2& -1\\
	\eta_{24}&	1&  -1& 1
\end{pNiceMatrix},
(\partial_2^{\W_2})^T =
\begin{pNiceMatrix}[first-row,first-col,r]
 ~ &\sigma_1 & \sigma_2 \\
	\eta_1&	0&  0\\
	\eta_2&	1& -2\\
	\eta_3&	2&  -1\\
	\eta_4&	-1&  2\\
	\eta_5&	0&  0\\
	\eta_6&	-2&  1\\
	\eta_7&	0&  0\\
	\eta_9&	-1&  2\\
	\eta_{10}&	-2&  1\\
	\eta_{11}&	-1&  2\\
	\eta_{12}&	2& -1\\
	\eta_{13}&	1& -2\\
	\eta_{14}&	1& -2\\
	\eta_{15}&	0&  0\\
	\eta_{16}&	0&  0\\
	\eta_{17}&	2& -1\\
	\eta_{19}&	-2&  1\\
	\eta_{20}&	1& -2\\
	\eta_{21}&	-2&  -1\\
	\eta_{22}&	0& 0\\
	\eta_{23}&	-1&  2\\
	\eta_{24}&	2&  -1
\end{pNiceMatrix}.
\]	
 
 We may check that the first Morse homology group, with respect to both $\W_1$ and $\W_2$, turns out to be $\mathbb{Z}_3$ as expected.
\end{appendices}

\end{document}